\newtheorem{theorem}{Theorem}[section]
\newtheorem{lemma}[theorem]{Lemma}
\theoremstyle{definition}
\newtheorem{example}[theorem]{Example}
\theoremstyle{remark}
\newtheorem{remark}[theorem]{\bf{Remark}}
\numberwithin{equation}{section}
\begin{document}
	
\title [New semi-norm of semi-Hilbertian space operators   ]{\small {   New semi-norm of semi-Hilbertian space operators and its application   } }

\author[P. Bhunia, A. Sen and K. Paul] {Pintu Bhunia, Anirban Sen  and Kallol Paul}

\address{(Bhunia) Department of Mathematics, Jadavpur University, Kolkata 700032, West Bengal, India}
\email{pintubhunia5206@gmail.com; 	pbhunia.math.rs@jadavpuruniversity.in}

\address [Sen] {Department of Mathematics, Jadavpur University, Kolkata 700032, West Bengal, India}
\email{anirbansenfulia@gmail.com}

\address{(Paul) Department of Mathematics, Jadavpur University, Kolkata 700032, West Bengal, India}
\email{kalloldada@gmail.com; kallol.paul@jadavpuruniversity.in}

%\thanks will become a 1st page footnote.
%\noindent \thanks{First  author would like to thank UGC, Govt. of India for the financial support in the form of SRF}
\thanks{First author would like to thank  CSIR, Govt. of India for the financial support in the form of JRF. Second  author would like to thank UGC, Govt. of India for the financial support in the form of SRF}
%    Information for second author

%    General info

\subjclass[2010]{Primary 47A12, Secondary  47A30, 47A63.}
\keywords{A-numerical radius; A-operator seminorm; Semi-Hilbertian space; Inequality}

%\date{}
\maketitle

\begin{abstract}
In this paper, we introduce a new semi-norm of operators on a semi-Hilbertian space, which generalizes the A-numerical radius and A-operator semi-norm. We study the basic properties of this semi-norm, including upper and lower bounds for it. As an application of this new semi-norm, we obtain upper bounds for the A-numerical radius of semi-Hilbertian space operators, which are sharper than the earlier ones.
\end{abstract}

\section{\textbf{Introduction}}

  \noindent  Let $\mathcal{B}(\mathcal{H})$ denote the $C^*$-algebra of all bounded linear operators on a complex Hilbert space $\mathcal{H}$ with usual inner product $\langle ., . \rangle$  and $\|.\|$ be the corresponding norm on $\mathcal{H}$, induced by the usual inner product  $\langle ., .\rangle.$ Throughout this paper,  $O$ is the zero operator on $\mathcal{H}$ and $I$ is the identity operator on $\mathcal{H}.$  For $T \in \mathcal{B}(\mathcal{H}),$ $R(T)$ and $N(T)$ denote the range space of $T$ and null space of $T$, respectively. We denote, the norm closure of $R(T)$ in $\mathcal{H}$ by  $\overline{R(T)}$ and the orthogonal projection onto $\overline{R(T)}$  by $P_{T}.$ Let $A$ be a self-adjoint operator in $\mathcal{B}(\mathcal{H}).$ Then $A$ is called positive if $\langle Ax,x \rangle \geq 0$ for all $x \in \mathcal{H}$. Let $\mathcal{B}(\mathcal{H})^+$ denote the cone of all positive operators on $\mathcal{B}(\mathcal{H}),$ i.e., $$\mathcal{B}(\mathcal{H})^+=\left\{ A \in \mathcal{B}(\mathcal{H})~:~\langle Ax,x \rangle \geq 0 ~\forall x \in \mathcal{H} \right\}.$$
Henceforth we reserve the symbol $A$   for a positive operator on $\mathcal{H}$, i.e.,  $A \in \mathcal{B}(\mathcal{H})^+ .$  A functional $\langle . , . \rangle_A$ ~:~ $\mathcal{H} \times \mathcal{H} ~\longrightarrow \mathbb{C}$ defined by $\langle x, y \rangle_A=\langle Ax, y \rangle$ for all $x,y \in \mathcal{H}$, is a semi-inner  product on  $\mathcal{H}$ and correspondingly $\left(\mathcal{H}, \langle . , .\rangle_A\right)$ is a semi-inner product space. The semi-norm induced by the semi-inner  product $\langle ., . \rangle_A$ is denoted by $\|.\|_{A},$ i.e., $\|x\|_{A}=\sqrt{\langle x , x \rangle_A}=\|A^{1/2}x\|$ for all $x \in \mathcal{H}.$ Clearly, $\|x\|_{A}=0$ if and only if $ x \in N(A)$ and therefore, $\|.\|_A$ is a norm if and only if $A$ is  one-one. Also, the semi-normed space $\left(\mathcal{H}, \| . \|_A\right)$ is complete if $R(A)$ is closed in  $\mathcal{H}.$
     For $T \in \mathcal{B}(\mathcal{H}),$ the $A$-operator semi-norm of $T$ is given by
    \begin{align*}
    	\|T\|_A &= \sup \left\{\frac{\|Tx\|_A}{\|x\|_A}~:~x \in \overline{R(A)}, x \neq 0\right\}.
    \end{align*}
   Now, $\|T\|_A < \infty$ if there exists a constant $c>0$ such that $\|Tx\|_{A} \leq c\|x\|_{A}$ for all $x \in \mathcal{H}$ and in this case, $T$ is called $A$-bounded operator. 
   For $T \in \mathcal{B}(\mathcal{H}),$ $A$-minimum modulus of $T$, denoted by $m_{A}(T)$ and is defined by  
     $$m_{A}(T)=  \inf \left\{\frac{\|Tx\|_A}{\|x\|_A}~:~x \in \overline{R(A)}, x \neq 0\right\}.$$
     For $T \in \mathcal{B}(\mathcal{H}),$ an operator $R \in \mathcal{B}(\mathcal{H})$ is an $A$-adjoint of $T$ if $\langle Tx , y \rangle_A = \langle x , Ry \rangle_A$ for every $x,y \in \mathcal{H}$.  For any $T \in \mathcal{B}(\mathcal{H}),$ the existence of an $A$-adjoint of $T$ is not guaranteed. In fact, an operator $T \in \mathcal{B}(\mathcal{H})$ may admit none, one or many $A$-adjoint. By Douglas Theorem (\cite{dog}) we have that for an operator $T \in \mathcal{B}(\mathcal{H})$ if $R(T^*A) \subseteq R(A)$, then $T$ admits an $A$-adjoint. 
     Let  $\mathcal{B}_{A}(\mathcal{H})$ denote the collection of all operators in $\mathcal{B}(\mathcal{H})$ which admit $A$-adjoint, i.e., 
     $$\mathcal{B}_A(\mathcal{H})=\{T\in \mathcal{B}(\mathcal{H}) :R(T^*A) \subseteq R(A) \}.$$ For $T\in \mathcal{B}_A(\mathcal{H})$, the operator equation $AX=T^*A$ has a unique solution $T^{\sharp_A}$, satisfying $R(T^{\sharp_A})\subseteq \overline{R(A)}$.
     Here $T^*$ is the adjoint of $T.$  Clearly, for $T \in \mathcal{B}_{A}(\mathcal{H})$, $AT^{\sharp_A}=T^*A.$ Note that $T^{\sharp_A}= A^{\dagger}T^*A,$ where $A^{\dagger}$ is the Moore-Penrose inverse of A, (see \cite{ms}). Again, by applying Douglas theorem, we have, $\mathcal{B}_{A^{1/2}}(\mathcal{H})$ is the set of all operators in $\mathcal{B}(\mathcal{H})$ which admit $A^{1/2}$ adjoint, i.e., $R(T^*A^{1/2})\subseteq R(A^{1/2})$. It is well-known 
     that $$\mathcal{B}_{A^{1/2}}(\mathcal{H})=\{T \in \mathcal{B}(\mathcal{H})~:~\exists~~c>0~~\textit{such that } ~~ \|Tx\|_A\leq c\|x\|_A~~ \forall x\in \mathcal{H}\}.$$ Therefore, the operators in  $\mathcal{B}_{A^{1/2}}(\mathcal{H})$ are called $A$-bounded operators. Notice that  $\mathcal{B}_{A}(\mathcal{H})$ and  $\mathcal{B}_{A^{1/2}}(\mathcal{H})$ are sub-algebras of  $\mathcal{B}(\mathcal{H})$.
    Moreover, $\mathcal{B}_{A}(\mathcal{H}) \subseteq \mathcal{B}_{A^{1/2}}(\mathcal{H}) \subseteq \mathcal{B}(\mathcal{H})$ and these inclusions become equalities if $A$ is one-one and has closed range.
    An operator $T \in \mathcal{B}(\mathcal{H})$ is said to be $A$-self-adjoint if $AT$ is self-adjoint, i.e., $AT=T^*A.$ If $T$ is A-self-adjoint, then  $T \in \mathcal{B}_{A}(\mathcal{H})$ and in general, $T\neq T^{\sharp_A}$. For  $T \in \mathcal{B}_A(\mathcal{H})$, $T= T^{\sharp_A}$ if and only if $T$ is $A$-self-adjoint and $R(T) \subseteq \overline{R(A)}.$  
   For $T, S \in \mathcal{B}_{A}(\mathcal{H})$, we have $((T^{\sharp_A})^{\sharp_A})^{\sharp_A}= T^{\sharp_A},
   ~(T^{\sharp_A})^{\sharp_A}= P_ATP_A, 
   ~(TS)^{\sharp_A}=S^{\sharp_A}T^{\sharp_A}.$ 
   Also, for $T, S \in \mathcal{B}_{A^{1/2}}(\mathcal{H})$, we have $~\|TS\|_A \leq \|T\|_A \|S\|_A,
   ~\|Tx\|_A \leq \|T\|_A \|x\|_A\, ~\mbox{for all $x$ in $\mathcal{H}$}.$
    Clearly,  $T^{\sharp_A}T$ and $TT^{\sharp_A}$ are $A$-self-adjoint operators and satisfy $\|TT^{\sharp_A}\|_A = \|T^{\sharp_A}T\|_A = \|T^{\sharp_A}\|^2_A = \|T\|^2_A.$
     For $T \in \mathcal{B}_{A^{1/2}}(\mathcal{H}),$ the $A$-numerical
     range  and the A-numerical radius of $T$, denoted by $W_{A}(T)$ and $w_A(T)$, are defined respectively as
     $$W_{A}(T)=\{ \langle Tx,x \rangle_{A} ~:~  x \in \mathcal{H}, \|x\|_{A}=1 \}.$$
     and 
      $$w_{A}(T)=\sup \{ |\langle Tx,x \rangle_{A}| ~:~  x \in \mathcal{H}, \|x\|_{A}=1 \}.$$
    Similar as the A-numerical radius, the A-Crawford number of $T\in \mathcal{B}_{A^{1/2}}(\mathcal{H})$, denoted as $c_A(T)$ and is defined as 
    $$c_{A}(T)=\inf \{ |\langle Tx,x \rangle_{A}| ~:~  x \in \mathcal{H}, \|x\|_{A}=1 \}.$$
    It is well known that the A-numerical radius and the A-operator semi-norm are equivalent semi-norm on $\mathcal{B}_{A^{1/2}}(\mathcal{H})$, satisfying that
    for every $T \in \mathcal{B}_{A^{1/2}}(\mathcal{H}),$  
    $$\frac{1}{2}\|T\|_{A} \leq w_{A}(T) \leq \|T\|_{A}.$$
    The numerical radius inequalities  and the $A$-numerical radius inequalities 
   have been studied by many mathematicians over the years, we refer the readers to see \cite{ 3, 1, 2, 4, zama} for some classical numerical radius inequalities and \cite{ a1, a2, feki 2, mose, zamani} for some  $A$-numerical radius inequalities. Motivated by the new norm  \cite{SBBP}, for each $\alpha \in [0,1],$   we introduce $A_\alpha$-semi-norm on $\mathcal{B}_{A^{1/2}}(\mathcal{H}),$ defined as follows: 
	$$ \left\|T\right\|_{A_\alpha}~=~\sup \left\{ \sqrt{\alpha|\langle Tx,x\rangle_{A}|^2+(1-\alpha) \|Tx\|_{A}^2}~:~ x\in \mathcal{H},\|x\|_{A}=1 \right\}.$$
	 If $\alpha=1,$ then $\left\|T\right\|_{A_\alpha}=w_{A}(T)$ and if $\alpha=0,$ then $\left\|T\right\|_{A_\alpha}=\|T\|_{A}.$ Considering $A=I$ we get, recently introduced the $\alpha$-norm  on $\mathcal{B}(\mathcal{H})$,  (see in \cite{BBSP,SBBP}).\\
	 In this article, we study the basic properties of this semi-norm. We also develope upper and lower bounds for $A_\alpha$-numerical radius. Applying these inequalities we obtain upper bounds for the $A$-numerical radius which improve and generalize the existing ones.

	\section{\textbf{Main results}}
	
\noindent 	We begin this section with the following theorem, proof of which follows easily from the definition of $A_\alpha$-semi-norm.

	\begin{theorem}\label{theo1}
		$\left\|~.~\right\|_{A_\alpha}$ defines a semi-norm on  $\mathcal{B}_{A^{1/2}}(\mathcal{H}).$ This semi-norm is equivalent to the A-numerical radius and the A-operator semi-norm, satisfying the following inequalities:
		\begin{eqnarray*}
		(i)~~	&&w_{A}(T) \leq \left\|T\right\|_{A_\alpha} \leq \sqrt{(4-3\alpha )}w_{A}(T),\\
		(ii)~~ &&  \max\left\{\frac{1}{2},\sqrt {1-\alpha} \right\}\|T\|_{A} \leq \left\|T\right\|_{A_\alpha} \leq \|T\|_{A}.
		\end{eqnarray*}	
	\end{theorem}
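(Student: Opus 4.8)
The plan is to verify first that $\|\cdot\|_{A_\alpha}$ is a semi-norm, then establish the four inequalities, treating each as a consequence of the pointwise definition together with the known equivalence $\tfrac12\|T\|_A\le w_A(T)\le\|T\|_A$.

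\emph{Semi-norm axioms.} Homogeneity is immediate: $\|\lambda T\|_{A_\alpha}=|\lambda|\,\|T\|_{A_\alpha}$ since $|\langle \lambda Tx,x\rangle_A|=|\lambda|\,|\langle Tx,x\rangle_A|$ and $\|\lambda Tx\|_A=|\lambda|\,\|Tx\|_A$. For the triangle inequality, fix a unit vector $x$ (with respect to $\|\cdot\|_A$) and write $f(T,x)=\sqrt{\alpha|\langle Tx,x\rangle_A|^2+(1-\alpha)\|Tx\|_A^2}$; this is the Euclidean norm of the vector $\bigl(\sqrt{\alpha}\,\langle Tx,x\rangle_A,\sqrt{1-\alpha}\,Tx\bigr)$ in $\mathbb{C}\oplus\mathcal{H}$ (or rather in $\mathbb{C}\oplus(\mathcal{H},\|\cdot\|_A)$), so $f(T+S,x)\le f(T,x)+f(S,x)$ by the triangle inequality in that space, and taking suprema gives subadditivity. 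Finiteness follows from $f(T,x)\le\|Tx\|_A\le\|T\|_A$, so $\|T\|_{A_\alpha}<\infty$ for $T\in\mathcal{B}_{A^{1/2}}(\mathcal{H})$.

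\emph{The inequalities.} For (ii), the upper bound is the pointwise estimate $\alpha|\langle Tx,x\rangle_A|^2+(1-\alpha)\|Tx\|_A^2\le\alpha\|Tx\|_A^2+(1-\alpha)\|Tx\|_A^2=\|Tx\|_A^2$ (using $|\langle Tx,x\rangle_A|\le\|Tx\|_A\|x\|_A=\|Tx\|_A$), so $\|T\|_{A_\alpha}\le\|T\|_A$. For the lower bound, discard the second term to get $f(T,x)\ge\sqrt{\alpha}\,|\langle Tx,x\rangle_A|$, hence $\|T\|_{A_\alpha}\ge\sqrt{\alpha}\,w_A(T)\ge\tfrac{\sqrt{\alpha}}{2}\|T\|_A$; discard the first term to get $f(T,x)\ge\sqrt{1-\alpha}\,\|Tx\|_A$, hence $\|T\|_{A_\alpha}\ge\sqrt{1-\alpha}\,\|T\|_A$. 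To finish the lower bound in (ii) we still need the constant $\tfrac12$ rather than $\tfrac{\sqrt\alpha}{2}$, so we combine: when $\alpha\ge\tfrac14$ the bound $\sqrt{\alpha}\,w_A(T)\ge\tfrac12 w_A(T)\cdot(2\sqrt\alpha)\ge\dots$ — more cleanly, use instead $f(T,x)^2=\alpha|\langle Tx,x\rangle_A|^2+(1-\alpha)\|Tx\|_A^2\ge\alpha w\text{-term}+(1-\alpha)\cdot(\text{that same lower quantity})$; the sharp route is to note $f(T,x)\ge\max\{\sqrt\alpha|\langle Tx,x\rangle_A|,\sqrt{1-\alpha}\|Tx\|_A\}$ and then take the sup of the first over $x$ to beat $\tfrac12\|T\|_A$ when $\alpha\ge1/4$, and use the second term when $\alpha\le1/4$ (there $\sqrt{1-\alpha}\ge\tfrac{\sqrt3}{2}>\tfrac12$), yielding $\max\{\tfrac12,\sqrt{1-\alpha}\}\|T\|_A\le\|T\|_{A_\alpha}$ in all cases.

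\emph{Part (i).} The lower bound $w_A(T)\le\|T\|_{A_\alpha}$ comes from $f(T,x)^2=\alpha|\langle Tx,x\rangle_A|^2+(1-\alpha)\|Tx\|_A^2\ge\alpha|\langle Tx,x\rangle_A|^2+(1-\alpha)|\langle Tx,x\rangle_A|^2=|\langle Tx,x\rangle_A|^2$, again using $|\langle Tx,x\rangle_A|\le\|Tx\|_A$; take the sup. The upper bound $\|T\|_{A_\alpha}\le\sqrt{4-3\alpha}\,w_A(T)$ is the substantive one: I would bound $f(T,x)^2\le\alpha|\langle Tx,x\rangle_A|^2+(1-\alpha)\|Tx\|_A^2\le\alpha w_A(T)^2+(1-\alpha)\|T\|_A^2\le\alpha w_A(T)^2+(1-\alpha)\cdot 4\,w_A(T)^2=(4-3\alpha)w_A(T)^2$, invoking $\|T\|_A\le2w_A(T)$ from the stated equivalence. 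Taking the supremum over unit $x$ and then the square root gives the claim.

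The main obstacle is purely bookkeeping in part (ii): getting the clean constant $\max\{1/2,\sqrt{1-\alpha}\}$ rather than the weaker $\max\{\sqrt\alpha/2,\sqrt{1-\alpha}\}$ requires observing that the two regimes $\alpha\le1/4$ and $\alpha\ge1/4$ together cover $[0,1]$, with the switch happening exactly where $\sqrt{1-\alpha}=\tfrac{\sqrt3}{2}\ge\tfrac12$; everything else is a direct pointwise estimate followed by a supremum. No delicate analysis or spectral input is needed beyond the already-cited semi-norm equivalence.
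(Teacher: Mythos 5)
Your overall strategy is the natural one (the paper itself offers no written proof, asserting that the theorem follows easily from the definition), and most of your steps are fine: the semi-norm axioms via the embedding $T\mapsto(\sqrt{\alpha}\,\langle Tx,x\rangle_A,\sqrt{1-\alpha}\,Tx)$, the upper bound $\left\|T\right\|_{A_\alpha}\le\|T\|_A$, the bound $\sqrt{1-\alpha}\,\|T\|_A\le\left\|T\right\|_{A_\alpha}$, and both inequalities in (i) (including $\left\|T\right\|_{A_\alpha}^2\le\alpha w_A^2(T)+(1-\alpha)\|T\|_A^2\le(4-3\alpha)w_A^2(T)$) are correct.

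There is, however, a genuine flaw in your argument for the constant $\tfrac12$ in (ii). You claim that for $\alpha\ge\tfrac14$ the term $\sup_x\sqrt{\alpha}\,|\langle Tx,x\rangle_A|=\sqrt{\alpha}\,w_A(T)$ "beats" $\tfrac12\|T\|_A$; it does not. From $w_A(T)\ge\tfrac12\|T\|_A$ you only get $\sqrt{\alpha}\,w_A(T)\ge\tfrac{\sqrt{\alpha}}{2}\|T\|_A$, which is strictly less than $\tfrac12\|T\|_A$ for every $\alpha<1$, and your fallback $\sqrt{1-\alpha}\,\|T\|_A$ only reaches $\tfrac12\|T\|_A$ when $\alpha\le\tfrac34$ (your threshold $\alpha=\tfrac14$ is also misplaced: the switch between the two constants in $\max\{\tfrac12,\sqrt{1-\alpha}\}$ occurs at $\alpha=\tfrac34$). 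So as written, the regime $\alpha\in(\tfrac34,1)$ is not covered. The repair is immediate and uses a piece you already proved: the lower bound in (i) gives $\left\|T\right\|_{A_\alpha}\ge w_A(T)$ for every $\alpha$ (since $|\langle Tx,x\rangle_A|\le\|Tx\|_A$ pointwise), and combining this with $w_A(T)\ge\tfrac12\|T\|_A$ yields $\left\|T\right\|_{A_\alpha}\ge\tfrac12\|T\|_A$ uniformly in $\alpha$; together with $\left\|T\right\|_{A_\alpha}\ge\sqrt{1-\alpha}\,\|T\|_A$ this gives exactly $\max\left\{\tfrac12,\sqrt{1-\alpha}\right\}\|T\|_A\le\left\|T\right\|_{A_\alpha}$ with no case analysis at all. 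Replace your case-splitting paragraph by this observation and the proof is complete.
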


Next we prove the following equivalent conditions.

    \begin{theorem}\label{th11}
    	Let $T \in \mathcal{B}_{A^{1/2}}(\mathcal{H})$ and let $\alpha \neq 0,1.$ Then the following conditions are equivalent:\\
    	(i)~$\left\|T\right\|_{A_\alpha}=\sqrt{\alpha w_{A}^2(T)+(1-\alpha) \|T\|^2_{A}}.$\\
    	(ii)~  There exists a sequence $\{x_{n}\} $ in $ \mathcal{H}$ with $\|x_{n}\|_{A}=1$ such that 
    	  	  $\lim_{n\to\infty}\|Tx_{n}\|_{A}$ $=\|T\|_{A}$ and $\lim_{n\to\infty}|\langle Tx_{n},x_{n}\rangle_{A}|=w_{A}(T).$
        \end{theorem}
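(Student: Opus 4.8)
The plan is to prove the two implications separately, with the implication (ii)$\Rightarrow$(i) being the more delicate direction. Throughout, write $f(x) = \alpha|\langle Tx,x\rangle_A|^2 + (1-\alpha)\|Tx\|_A^2$ for unit vectors $x$ (in the $\|\cdot\|_A$-sense), so that $\|T\|_{A_\alpha}^2 = \sup_{\|x\|_A=1} f(x)$.

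For (ii)$\Rightarrow$(i): given a sequence $\{x_n\}$ with $\|x_n\|_A=1$, $\|Tx_n\|_A \to \|T\|_A$ and $|\langle Tx_n,x_n\rangle_A| \to w_A(T)$, I would simply evaluate $f(x_n) = \alpha|\langle Tx_n,x_n\rangle_A|^2 + (1-\alpha)\|Tx_n\|_A^2 \to \alpha w_A^2(T) + (1-\alpha)\|T\|_A^2$. Since $\|T\|_{A_\alpha}^2 \geq f(x_n)$ for every $n$, taking the limit gives $\|T\|_{A_\alpha}^2 \geq \alpha w_A^2(T) + (1-\alpha)\|T\|_A^2$. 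For the reverse inequality, note that for any unit vector $x$ we have $|\langle Tx,x\rangle_A| \leq w_A(T)$ and $\|Tx\|_A \leq \|T\|_A$, hence $f(x) \leq \alpha w_A^2(T) + (1-\alpha)\|T\|_A^2$; taking the supremum gives $\|T\|_{A_\alpha}^2 \leq \alpha w_A^2(T) + (1-\alpha)\|T\|_A^2$. Combining, we get equality. Note this direction does not even use $\alpha \neq 0,1$.

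For (i)$\Rightarrow$(ii): assume $\|T\|_{A_\alpha}^2 = \alpha w_A^2(T) + (1-\alpha)\|T\|_A^2$. Pick a maximizing sequence $\{x_n\}$ with $\|x_n\|_A=1$ and $f(x_n) \to \|T\|_{A_\alpha}^2$. Since $|\langle Tx_n,x_n\rangle_A|^2 \leq w_A^2(T)$ and $\|Tx_n\|_A^2 \leq \|T\|_A^2$, write $a_n = w_A^2(T) - |\langle Tx_n,x_n\rangle_A|^2 \geq 0$ and $b_n = \|T\|_A^2 - \|Tx_n\|_A^2 \geq 0$. Then $f(x_n) = \alpha w_A^2(T) + (1-\alpha)\|T\|_A^2 - \alpha a_n - (1-\alpha)b_n$, so $\alpha a_n + (1-\alpha) b_n \to 0$. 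Because $0 < \alpha < 1$, both coefficients $\alpha$ and $1-\alpha$ are strictly positive, and both $a_n, b_n$ are nonnegative, it follows that $a_n \to 0$ and $b_n \to 0$ separately; equivalently $|\langle Tx_n,x_n\rangle_A| \to w_A(T)$ and $\|Tx_n\|_A \to \|T\|_A$, which is exactly (ii).

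The main (and essentially only) obstacle is the splitting step in (i)$\Rightarrow$(ii): from $\alpha a_n + (1-\alpha)b_n \to 0$ with $a_n, b_n \geq 0$, concluding $a_n \to 0$ and $b_n \to 0$. This is where the hypothesis $\alpha \neq 0,1$ is essential — if $\alpha = 0$ or $\alpha = 1$ the semi-norm degenerates to $\|T\|_A$ or $w_A(T)$ and the equivalence fails, since (i) becomes automatic while (ii) need not hold. Everything else is a routine unwinding of the definition of $\|\cdot\|_{A_\alpha}$ together with the elementary bounds $|\langle Tx,x\rangle_A| \leq w_A(T)$ and $\|Tx\|_A \leq \|T\|_A$ for $\|x\|_A = 1$.
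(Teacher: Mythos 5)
Your proof is correct and follows essentially the same route as the paper: a maximizing sequence for $\left\|T\right\|_{A_\alpha}$, the trivial bound $\alpha|\langle Tx,x\rangle_{A}|^2+(1-\alpha)\|Tx\|_{A}^2\leq \alpha w_{A}^2(T)+(1-\alpha)\|T\|_{A}^2$, and splitting the limit using that both weights are strictly positive when $0<\alpha<1$. The only cosmetic difference is that your nonnegative-deficit argument ($a_n,b_n\geq 0$) avoids the paper's extraction of a convergent subsequence, which is a minor simplification rather than a different method.
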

     \begin{proof}
        (i) $\Longrightarrow$ (ii):
     	Clearly from the definition of $\|T\|_{A_\alpha}$ it follows that there exists a sequence  $\{x_{n}\} $ in $\mathcal{H}$ with $\|x_{n}\|_{A}=1$ such that $$\left\|T\right\|_{A_\alpha}=\lim_{n\to\infty} \sqrt{ \alpha|\langle Tx_{n},x_{n}\rangle_{A}|^2+(1-\alpha) \|Tx_{n}\|_{A}^2 }. $$
     	Now, $\{|\langle Tx_{n},x_{n}\rangle_{A}|\}$ and $\{\|Tx_{n}\|_{A}\}$ are both bounded sequences of real numbers and so there exists a subsequence $\{x_{n_{k}}\}$ of  $\{x_{n}\}$ such that $\{|\langle Tx_{n_{k}},x_{n_{k}}\rangle_{A}|\}$ and $\{\|Tx_{n_{k}}\|_{A}\}$ are convergent. Therefore, we get 
     	\begin{align*}
     		\alpha w_{A}^2(T)+(1-\alpha) \|T\|^2_{A}&=\left\|T\right\|^2_{A_\alpha}\\&= \lim_{k\to\infty}\left\{  \alpha|\langle Tx_{n_{k}},x_{n_{k}}\rangle_{A}|^2+(1-\alpha) \|Tx_{n_{k}}\|_{A}^2 \right\} \\ & =\alpha \lim_{k\to\infty} |\langle Tx_{n_{k}},x_{n_{k}}\rangle_{A}|^2 + (1-\alpha) \lim_{k\to\infty} \|Tx_{n_{k}}\|_{A}^2\\ & \leq	\alpha w_{A}^2(T)+(1-\alpha)  \|T\|^2_{A}.
        \end{align*}
       This implies that
       	$\lim_{k\to\infty}\|Tx_{n_{k}}\|_{A}=\|T\|_{A}$ and $\lim_{k\to\infty}|\langle Tx_{n_{k}},x_{n_{k}}\rangle_{A}|=w_{A}(T).$
       	
       	(ii) $\Longrightarrow$ (i):
       From the definition of $A_\alpha$-semi-norm we have 
       	\begin{align*}
       	\left\|T\right\|_{A_\alpha} & =  \sup_{\|x\|_{A}=1} \sqrt{ \alpha|\langle Tx,x\rangle_{A}|^2+(1-\alpha) \|Tx\|_{A}^2 }\\ & \geq \lim_{n\to\infty} \sqrt{ \alpha|\langle Tx_{n},x_{n}\rangle_{A}|^2+(1-\alpha) \|Tx_{n}\|_{A}^2 } \\ & = \sqrt{\alpha w_{A}^2(T)+(1-\alpha) \|T\|^2_{A}}.
       	\end{align*}
       This completes the proof.
     \end{proof}

Next, we study an important basic property for the $A_\alpha$-semi-norm. First we recall that
an operator $U\in  \mathcal{B}_A(\mathcal{H})$ is said to be $A$-unitary if $\|Ux\|_A=\|U^{\sharp_A}x\|_A=\|x\|_A$, for all $x\in \mathcal{H}$.
It was shown in \cite{acg1} that an operator $U\in  \mathcal{B}_A(\mathcal{H})$ is $A$-unitary if and only if $U^{\sharp_A} U=(U^{\sharp_A})^{\sharp_A} U^{\sharp_A}=P_A$.

\begin{theorem}
	If $T \in \mathcal{B}_{A}(\mathcal{H})$, then
	\[\|UTU^{\sharp_A}\|_{A_\alpha}=\|T\|_{A_\alpha}\,\, \text{for every A-unitary operator }\,\, U.\]
\end{theorem}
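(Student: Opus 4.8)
The plan is to establish the two inequalities $\|UTU^{\sharp_A}\|_{A_\alpha}\le\|T\|_{A_\alpha}$ and $\|T\|_{A_\alpha}\le\|UTU^{\sharp_A}\|_{A_\alpha}$ by performing a change of variable inside the supremum that defines $\|\cdot\|_{A_\alpha}$. As a preliminary reduction I would first record that for an $A$-bounded operator $S$ and any $x\in\mathcal{H}$ one has $\langle Sx,x\rangle_A=\langle SP_Ax,P_Ax\rangle_A$, $\|Sx\|_A=\|SP_Ax\|_A$ and $\|P_Ax\|_A=\|x\|_A$; this follows by splitting $x=P_Ax+(I-P_A)x$, noting that $(I-P_A)x\in N(A)\subseteq N(A^{1/2})$ and that $\|S(I-P_A)x\|_A\le\|S\|_A\|(I-P_A)x\|_A=0$, so that $A^{1/2}S(I-P_A)x=0=A^{1/2}(I-P_A)x$. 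Consequently, for any $A$-bounded $S$ the supremum in $\|S\|_{A_\alpha}$ may be restricted to unit $A$-norm vectors lying in $\overline{R(A)}$.

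For the first inequality, note that $UTU^{\sharp_A}\in\mathcal{B}_A(\mathcal{H})\subseteq\mathcal{B}_{A^{1/2}}(\mathcal{H})$ since $\mathcal{B}_A(\mathcal{H})$ is an algebra, so $\|UTU^{\sharp_A}\|_{A_\alpha}$ is well defined. Given $x\in\mathcal{H}$ with $\|x\|_A=1$, set $y=U^{\sharp_A}x$; then $\|y\|_A=\|x\|_A=1$ since $U$ is $A$-unitary, and combining the defining relation $\langle Uz,w\rangle_A=\langle z,U^{\sharp_A}w\rangle_A$ of the $A$-adjoint with the isometry property $\|Uz\|_A=\|z\|_A$ gives
$$\langle UTU^{\sharp_A}x,x\rangle_A=\langle Ty,y\rangle_A \qquad\text{and}\qquad \|UTU^{\sharp_A}x\|_A=\|Ty\|_A.$$
Therefore $\sqrt{\alpha|\langle UTU^{\sharp_A}x,x\rangle_A|^2+(1-\alpha)\|UTU^{\sharp_A}x\|_A^2}=\sqrt{\alpha|\langle Ty,y\rangle_A|^2+(1-\alpha)\|Ty\|_A^2}\le\|T\|_{A_\alpha}$, and taking the supremum over all such $x$ yields $\|UTU^{\sharp_A}\|_{A_\alpha}\le\|T\|_{A_\alpha}$.

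For the reverse inequality, by the preliminary reduction it is enough to dominate $\sqrt{\alpha|\langle Ty,y\rangle_A|^2+(1-\alpha)\|Ty\|_A^2}$ for $y\in\overline{R(A)}$ with $\|y\|_A=1$. Put $x=Uy$, so that $\|x\|_A=\|y\|_A=1$; using the characterization $U^{\sharp_A}U=P_A$ of $A$-unitary operators from \cite{acg1} together with $P_Ay=y$ (valid because $y\in\overline{R(A)}$), one obtains $U^{\sharp_A}x=U^{\sharp_A}Uy=P_Ay=y$. Substituting back exactly as in the previous paragraph gives $\langle Ty,y\rangle_A=\langle UTU^{\sharp_A}x,x\rangle_A$ and $\|Ty\|_A=\|UTU^{\sharp_A}x\|_A$, hence $\sqrt{\alpha|\langle Ty,y\rangle_A|^2+(1-\alpha)\|Ty\|_A^2}\le\|UTU^{\sharp_A}\|_{A_\alpha}$; taking the supremum over $y\in\overline{R(A)}$ with $\|y\|_A=1$ yields $\|T\|_{A_\alpha}\le\|UTU^{\sharp_A}\|_{A_\alpha}$. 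Combining the two estimates completes the proof.

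I do not expect a genuine obstacle here: the whole argument is a substitution driven by the facts that $U$ preserves the $A$-semi-norm and that $U^{\sharp_A}$ is the $A$-adjoint of $U$. The one point that needs care is the preliminary reduction to $\overline{R(A)}$, which is precisely what makes the backward substitution work, since $U^{\sharp_A}Uy$ only returns $y$ when $y\in\overline{R(A)}$; keeping the occurrences of $P_A$ straight is the only bookkeeping required.
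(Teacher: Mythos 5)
Your proof is correct and follows essentially the same route as the paper's: the forward direction via the substitution $y=U^{\sharp_A}x$ together with $\|U^{\sharp_A}x\|_A=\|x\|_A$, and the reverse direction via $x=Uy$ using $U^{\sharp_A}U=P_A$ after discarding the $N(A)$-component. The only cosmetic differences are that the paper packages the argument as the set equality $L_T=L_{UTU^{\sharp_A}}$ and handles the projection by writing $x=P_Ax+y$ with $y\in N(A)$ inside the argument, whereas you prove two sup-inequalities and perform the reduction to $\overline{R(A)}$ up front.
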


\begin{proof}
	For $S\in  \mathcal{B}_{A}(\mathcal{H}),$ we consider a set $L_S$ as follows:
	\[L_S= \left \{  \left (\sqrt{\alpha}\langle Sx,x\rangle_A , \sqrt{1-\alpha}\|Sx\|_A \right ) : x\in \mathcal{H}, \|x\|_A=1 \right \}.\]
	Let $(\lambda, \mu)\in L_{UTU^{\sharp_A}}$. Then there exists $x\in \mathcal{H}$ with $\|x\|_A=1$ such that $\lambda=\sqrt{\alpha}\langle UTU^{\sharp_A}x,x\rangle_A$ and $\mu=\sqrt{1-\alpha}\|UTU^{\sharp_A}x\|_A$. It is easy to verify that $\lambda=\sqrt{\alpha}\langle TU^{\sharp_A}x,U^{\sharp_A}x\rangle_A$ and $\mu=\sqrt{1-\alpha}\|TU^{\sharp_A}x\|_A$. Since $\|U^{\sharp_A}x\|_A=\|x\|_A$ for all $x\in \mathcal{H}$, so $(\lambda, \mu)\in L_{T}$. Therefore, we have $ L_{UTU^{\sharp_A}}\subseteq L_{T}.$ \\
	Now, let $(\beta,\gamma)\in L_{T}$. Then, there exist $x\in \mathcal{H}$ with $\|x\|_A=1$ such that 
	$\beta=\sqrt{\alpha}\langle Tx,x\rangle_A $ and $\gamma= \sqrt{1-\alpha}\|Tx\|_A$, i.e., $\gamma= \sqrt{(1-\alpha)  \langle UTx,UTx\rangle_A}$. Let $x=P_Ax+y$ where $y\in N(A)$. Since $N(A)$ is invariant subspace under $T$, i.e., $T(N(A))\subseteq N(A)$, so by using this argument we get,
	$$\beta=\sqrt{\alpha}\langle TP_Ax,P_Ax\rangle_A\,\,\text{and}\,\, \gamma= \sqrt{(1-\alpha)  \langle UTP_Ax,UTP_Ax\rangle_A}.$$
	Since $U^{\sharp_A}U=P_A$, so we have, 
		$\beta=\sqrt{\alpha}\langle UTU^{\sharp_A}Ux,Ux\rangle_A\,\,\text{and}\,\, $ 
		$$\gamma= \sqrt{(1-\alpha)  \langle UTU^{\sharp_A}Ux,UTU^{\sharp_A}Ux\rangle_A}=\sqrt{1-\alpha}\| UTU^{\sharp_A}Ux\|_A.$$
		Now $\|Ux\|_A=\|x\|_A$  for all $x\in \mathcal{H}$, so  $(\beta,\gamma)\in L_{UTU^{\sharp_A}}.$ Therefore, we have $L_T\subseteq L_{UTU^{\sharp_A}}$. Hence, we conclude that $L_T= L_{UTU^{\sharp_A}}$. This completes the proof.
	
\end{proof}

 In the following theorem we obtain a lower bound for the $A_\alpha$-semi-norm.

	\begin{theorem}\label{theo2}
		Let $T\in \mathcal{B}_{A}(\mathcal{H}). $ Then
		\begin{align*}
			 \left\|T\right\|_{A_\alpha}^2  \geq \max \Big\{  \alpha w_{A}^2(T)+(1-\alpha) c_{A}(T^{\sharp_A}T), \alpha c_{A}^2(T)+(1-\alpha) \|T\|_{A}^2,\\ 2\sqrt{\alpha (1-\alpha) }w_{A}(T)\sqrt{c_{A}(T^{\sharp_A}T)}, 2\sqrt{\alpha (1-\alpha) }c_{A}(T)\|T\|_{A} \Big\}.
		\end{align*}
    \end{theorem}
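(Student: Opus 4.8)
The plan is to fix a unit vector $x \in \mathcal{H}$ (i.e.\ $\|x\|_A = 1$) and examine the quantity
\[
\alpha |\langle Tx,x\rangle_A|^2 + (1-\alpha)\|Tx\|_A^2,
\]
which is bounded above by $\|T\|_{A_\alpha}^2$ by definition. To get the first term inside the max, I would simply bound $|\langle Tx,x\rangle_A|^2 \le w_A^2(T)$ is wrong in the right direction; rather I keep $|\langle Tx,x\rangle_A|$ as is in one place and use the lower bound $\|Tx\|_A^2 = \langle T^{\sharp_A}Tx,x\rangle_A \ge c_A(T^{\sharp_A}T)$ (note $T^{\sharp_A}T$ is $A$-positive, so its $A$-numerical range lies in $[m,M]$ with $m \ge c_A(T^{\sharp_A}T)$). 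Then take the supremum over $x$: choosing $x$ so that $|\langle Tx,x\rangle_A| \to w_A(T)$ forces $\|T\|_{A_\alpha}^2 \ge \alpha w_A^2(T) + (1-\alpha) c_A(T^{\sharp_A}T)$. Symmetrically, bounding $|\langle Tx,x\rangle_A|^2 \ge c_A^2(T)$ always, and taking a sequence $x_n$ with $\|Tx_n\|_A \to \|T\|_A$, gives $\|T\|_{A_\alpha}^2 \ge \alpha c_A^2(T) + (1-\alpha)\|T\|_A^2$.

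For the two cross terms, the idea is to invoke the AM--GM inequality $a^2 + b^2 \ge 2ab$ with $a = \sqrt{\alpha}|\langle Tx,x\rangle_A|$ and $b = \sqrt{1-\alpha}\|Tx\|_A$, so that
\[
\alpha |\langle Tx,x\rangle_A|^2 + (1-\alpha)\|Tx\|_A^2 \ge 2\sqrt{\alpha(1-\alpha)}\,|\langle Tx,x\rangle_A|\,\|Tx\|_A.
\]
Now I want to choose $x$ (or a sequence $x_n$) that simultaneously makes $|\langle Tx,x\rangle_A|$ close to $w_A(T)$ while keeping $\|Tx\|_A$ bounded below by $\sqrt{c_A(T^{\sharp_A}T)}$ — but the latter is automatic since $\|Tx\|_A^2 = \langle T^{\sharp_A}Tx,x\rangle_A \ge c_A(T^{\sharp_A}T)$ for every unit vector $x$. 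Hence along a sequence realizing $w_A(T)$ we get $\|T\|_{A_\alpha}^2 \ge 2\sqrt{\alpha(1-\alpha)}\,w_A(T)\sqrt{c_A(T^{\sharp_A}T)}$. Likewise, using $|\langle Tx,x\rangle_A| \ge c_A(T)$ always and a sequence with $\|Tx_n\|_A \to \|T\|_A$ yields $\|T\|_{A_\alpha}^2 \ge 2\sqrt{\alpha(1-\alpha)}\,c_A(T)\|T\|_A$.

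The only genuine subtlety — and the step I expect to require the most care — is the passage to suprema/infima via sequences rather than attained maxima: since $\mathcal{H}$ need not be finite-dimensional and $A$ may not have closed range, $w_A(T)$ and $\|T\|_A$ may not be attained. I would handle this by fixing $\varepsilon > 0$, picking a unit vector $x_\varepsilon$ with $|\langle Tx_\varepsilon,x_\varepsilon\rangle_A| > w_A(T) - \varepsilon$ (resp.\ $\|Tx_\varepsilon\|_A > \|T\|_A - \varepsilon$), plugging into the pointwise inequality, and letting $\varepsilon \to 0$; each of the four bounds is continuous in the relevant quantity so this passes through cleanly. One should also note that $c_A(T^{\sharp_A}T)$ is well-defined and finite because $T \in \mathcal{B}_A(\mathcal{H})$ guarantees $T^{\sharp_A} \in \mathcal{B}_A(\mathcal{H})$ and hence $T^{\sharp_A}T \in \mathcal{B}_{A^{1/2}}(\mathcal{H})$, so all the $A$-numerical quantities in the statement make sense. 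Assembling the four lower bounds and taking their maximum completes the proof.
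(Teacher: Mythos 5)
Your proposal is correct and follows essentially the same route as the paper: the pointwise bound $\alpha|\langle Tx,x\rangle_A|^2+(1-\alpha)\|Tx\|_A^2$ with $\|Tx\|_A^2=\langle T^{\sharp_A}Tx,x\rangle_A\ge c_A(T^{\sharp_A}T)$ and $|\langle Tx,x\rangle_A|\ge c_A(T)$, the AM--GM step for the two cross terms, and then a supremum over $A$-unit vectors (your $\varepsilon$-sequence argument just makes explicit what the paper's ``taking supremum'' leaves implicit). No gaps worth flagging.
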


    \begin{proof}
    	Let $x \in \mathcal{H}$ with $\|x\|_{A}=1$. Then we get,
    	\begin{eqnarray*}
    		\left\|T\right\|_{A_\alpha}^2 && \geq \alpha|\langle Tx,x\rangle_{A}|^2+(1-\alpha) \|Tx\|_{A}^2 \\ && = \alpha|\langle Tx,x\rangle_{A}|^2 + (1-\alpha) \langle T^{\sharp_A}Tx,x\rangle_{A} \\ && \geq \alpha|\langle Tx,x\rangle_{A}|^2 + (1-\alpha) c_{A}(T^{\sharp_A}T).
    	\end{eqnarray*}	
        Taking supremum over all $x$ in $\mathcal{H}$ with $\|x\|_{A}=1,$ we get that
        \begin{eqnarray}\label{eqn1}
              \left\|T\right\|_{A_\alpha}^2 \geq  \alpha w_{A}^2(T)+(1-\alpha)c_{A}(T^{\sharp_A}T).
        \end{eqnarray}
       Proceeding similarly we also get
        \begin{eqnarray}\label{eqn2}
        \left\|T\right\|_{A_\alpha}^2 \geq \alpha c_{A}^2(T)+(1-\alpha) \|T\|_{A}^2.
        \end{eqnarray}
        Now, we also have
        \begin{eqnarray*}
        	\left\|T\right\|_{A_\alpha}^2 && \geq \alpha|\langle Tx,x\rangle_{A}|^2+(1-\alpha)\|Tx\|_{A}^2 \\  && \geq 2\sqrt{\alpha (1-\alpha) }|\langle Tx,x\rangle_{A}|\|Tx\|_{A} \\ && \geq  2\sqrt{\alpha (1-\alpha) }|\langle Tx,x\rangle_{A}|\sqrt{c_{A}(T^{\sharp_A}T)}.
        \end{eqnarray*}
     Taking supremum over all $x$ in $\mathcal{H}$ with $\|x\|_{A}=1$, we get
       \begin{eqnarray}\label{eqn3}
       \left\|T\right\|_{A_\alpha}^2 \geq  2\sqrt{\alpha (1-\alpha)  }w_{A}(T)\sqrt{c_{A}(T^{\sharp_A}T)}.
       \end{eqnarray}
       Similarly,
        \begin{eqnarray}\label{eqn4}
        \left\|T\right\|_{A_\alpha}^2 \geq  2\sqrt{\alpha (1-\alpha)  }c_{A}(T)\|T\|_{A}.
        \end{eqnarray}
        Combining (\ref{eqn1}), (\ref{eqn2}), (\ref{eqn3}) and (\ref{eqn4}) we get our desired inequality.
      \end{proof}
	
	Next, we obtain upper bounds for the $A_\alpha$-semi-norm of the product of two operators.
	
	\begin{theorem}\label{theo3}
		Let $T,S \in \mathcal{B}_{A^{1/2}}(\mathcal{H})$ and $\alpha \neq 1.$ Then 
		$$\left\|TS\right\|_{A_\alpha} \leq { \min\left\{ \frac{2}{\sqrt{1-\alpha}},\frac{1}{(1-\alpha)},{4} \right\} }\left\|T\right\|_{A_\alpha}\left\|S\right\|_{A_\alpha}.$$
	\end{theorem}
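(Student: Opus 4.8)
The plan is to reduce the $A_\alpha$-semi-norm to the ordinary $A$-operator semi-norm, which is submultiplicative, and then translate back by means of the two-sided comparison in Theorem \ref{theo1}(ii).

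First I would note that for any vector $x$ with $\|x\|_A=1$ one has $|\langle TSx,x\rangle_A|\le\|TSx\|_A$ by the Cauchy--Schwarz inequality for $\langle\cdot,\cdot\rangle_A$, and $\|TSx\|_A\le\|T\|_A\|Sx\|_A\le\|T\|_A\|S\|_A$; hence $\alpha|\langle TSx,x\rangle_A|^2+(1-\alpha)\|TSx\|_A^2\le\|TSx\|_A^2\le\|T\|_A^2\|S\|_A^2$, so that $\|TS\|_{A_\alpha}\le\|T\|_A\|S\|_A$. Equivalently, this is just the upper bound $\|TS\|_{A_\alpha}\le\|TS\|_A$ of Theorem \ref{theo1}(ii) combined with the submultiplicativity $\|TS\|_A\le\|T\|_A\|S\|_A$ recorded in the preliminaries.

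Next I would invoke the lower bound of Theorem \ref{theo1}(ii), namely $\max\{\tfrac12,\sqrt{1-\alpha}\}\,\|R\|_A\le\|R\|_{A_\alpha}$ for every $R\in\mathcal{B}_{A^{1/2}}(\mathcal{H})$, which yields both $\|R\|_A\le 2\|R\|_{A_\alpha}$ and, since $\alpha\ne1$, $\|R\|_A\le\tfrac{1}{\sqrt{1-\alpha}}\|R\|_{A_\alpha}$. Feeding these into $\|TS\|_{A_\alpha}\le\|T\|_A\|S\|_A$ in the three possible ways --- the factor $\tfrac{1}{\sqrt{1-\alpha}}$ for both $T$ and $S$, the factor $2$ for both, or $2$ for one and $\tfrac{1}{\sqrt{1-\alpha}}$ for the other --- produces the bounds $\tfrac{1}{1-\alpha}$, $4$, and $\tfrac{2}{\sqrt{1-\alpha}}$ times $\|T\|_{A_\alpha}\|S\|_{A_\alpha}$ respectively, and taking the smallest of these gives the assertion.

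There is no genuine obstacle here; the only points to watch are that the hypothesis $\alpha\ne1$ is precisely what keeps $\tfrac{1}{\sqrt{1-\alpha}}$ finite, and that none of the three constants dominates the others for all $\alpha$ (indeed they all equal $4$ when $\alpha=\tfrac34$), which is why the inequality is most naturally stated with a minimum.
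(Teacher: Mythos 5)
Your proposal is correct and follows essentially the same route as the paper: both reduce to $\left\|TS\right\|_{A_\alpha}\leq \left\|TS\right\|_{A}\leq \left\|T\right\|_{A}\left\|S\right\|_{A}$ and then convert the $A$-operator semi-norms back to $\left\|\cdot\right\|_{A_\alpha}$ via the comparisons of Theorem \ref{theo1}, combining the factors $2$ and $\tfrac{1}{\sqrt{1-\alpha}}$ in the three possible ways to obtain the three constants. The only cosmetic difference is that you quote the bound $\tfrac12\left\|\cdot\right\|_{A}\leq\left\|\cdot\right\|_{A_\alpha}$ directly from Theorem \ref{theo1}(ii), while the paper passes through $\left\|\cdot\right\|_{A}\leq 2w_{A}(\cdot)$ and Theorem \ref{theo1}(i), which amounts to the same estimate.
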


    \begin{proof}
    		From the definition of $A_\alpha$-semi-norm we get,
    	\begin{eqnarray*}
     	\left\|TS\right\|_{A_\alpha}^2 && = \sup_{\|x\|_{A}=1} \left\{ \alpha|\langle TSx,x\rangle_{A}|^2+\beta \|TSx\|_{A}^2 \right\} \\
     	 && \leq \alpha 
     	w_{A}^2(TS) + (1-\alpha) \| TS \|_{A}^2 \\
     	 && \leq \alpha \| TS \|_{A}^2 + (1-\alpha)\| TS \|_{A}^2 \\ 
     	 && \leq \| T \|_{A}^2 \| S \|_{A}^2 \\ 
     	 && \leq 4 w_{A}^2(T)\| S \|_{A}^2,~~\mbox{as $\| T \|_{A} \leq 2w_{A}(T)$ } \\
     	  && \leq  4 w_{A}^2(T)\left(1/(1-\alpha)\right)\left\|S\right\|_{A_\alpha}^2,~~\mbox{by Theorem \ref{theo1}} \\
     	   && \leq \frac{4}{1-\alpha}\left\|T\right\|_{A_\alpha}^2\left\|S\right\|_{A_\alpha}^2,~~\mbox{by Theorem \ref{theo1}}.
     \end{eqnarray*}
     	Thus, 
     \begin{eqnarray}\label{p1}
     		\left\|TS\right\|_{A_\alpha} \leq \sqrt{\frac{4}{1-\alpha}}\left\|T\right\|_{A_\alpha}\left\|S\right\|_{A_\alpha}.
     \end{eqnarray}
      Proceeding as above, and using $\sqrt{1-\alpha}\left\|T\right\|_{A} \leq \left\|T\right\|_{A_\alpha}$ and $\sqrt{1-\alpha}\left\|S\right\|_{A} \leq \left\|S\right\|_{A_\alpha}$ we get,
      \begin{eqnarray}\label{p2}
      \left\|TS\right\|_{A_\alpha} \leq \sqrt{\frac{1}{(1-\alpha)^2}}\left\|T\right\|_{A_\alpha}\left\|S\right\|_{A_\alpha}.
      \end{eqnarray}
      Finally, using $\left\|T\right\|_{A} \leq 2w_{A}(T)$ and $\left\|S\right\|_{A} \leq 2w_{A}(S)$, and by Theorem \ref{theo1} we get,
      \begin{eqnarray}\label{p3}
      \left\|TS\right\|_{A_\alpha} \leq 4 \left\|T\right\|_{A_\alpha}\left\|S\right\|_{A_\alpha}.
       \end{eqnarray}
      Combining (\ref{p1}),(\ref{p2}) and (\ref{p3}) we get the required inequality.
    \end{proof}
	
	To prove next inequality we need the following lemma.
	
	\begin{lemma}\label{lemma1}
		Let $T,S \in \mathcal{B}_{A}(\mathcal{H}).$ Then the following inequalities hold:\\ 
		    (i)~~If $TS=ST,$ then $w_{A}(TS) \leq 2w_{A}(T)w_{A}(S).$
			\\ (ii)~~If $(TS)^{\sharp_A}=T^{\sharp_A}S,$ then $w_{A}(TS) \leq w_{A}(T)\left\|S\right\|_{A}.$
	\end{lemma}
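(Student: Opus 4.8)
The plan is to prove the two parts independently, each time reducing to standard facts about the $A$-numerical radius.

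For part (i), I would first handle the degenerate cases $w_A(T)=0$ and $w_A(S)=0$. If $w_A(T)=0$, then $\langle ATx,x\rangle=0$ for every $x\in\mathcal H$ (for $x$ with $\|x\|_A>0$ this follows by homogeneity, and for $x\in N(A)$ it is automatic since $\langle ATx,x\rangle=\langle Tx,Ax\rangle=0$), hence $AT=O$, so $ATS=O$ and both sides of the claimed inequality vanish; likewise $w_A(S)=0$ forces $AS=O$, and then $ATS=AST=O$ using $TS=ST$. So I may assume $w_A(T),w_A(S)>0$, and after replacing $T$ by $T/w_A(T)$ and $S$ by $S/w_A(S)$ it is enough to show $w_A(TS)\le 2$ when $w_A(T)=w_A(S)=1$. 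The key step is the polarization identity $4TS=(T+S)^2-(T-S)^2$, which holds precisely because $TS=ST$; feeding it into the triangle inequality for the semi-norm $w_A(\cdot)$ and the power inequality $w_A(X^2)\le w_A^2(X)$ for $A$-bounded operators (which is known) gives
\[4\,w_A(TS)\le w_A\big((T+S)^2\big)+w_A\big((T-S)^2\big)\le w_A^2(T+S)+w_A^2(T-S)\le 2\big(w_A(T)+w_A(S)\big)^2=8,\]
hence $w_A(TS)\le 2$. The subtle point is the rescaling: the estimate $4\,w_A(TS)\le 2\big(w_A(T)+w_A(S)\big)^2$ is not homogeneous of the right degree, so one genuinely has to normalize to unit $A$-numerical radii (and strip off the degenerate cases first) in order to recover the stated inequality $w_A(TS)\le 2\,w_A(T)\,w_A(S)$.

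For part (ii), the plan is to use the well-known characterization $w_A(X)=\sup_{\theta\in\mathbb R}\big\|\operatorname{Re}_A(e^{i\theta}X)\big\|_A$ for $X\in\mathcal B_A(\mathcal H)$, where $\operatorname{Re}_A(X):=\tfrac12\big(X+X^{\sharp_A}\big)$; this rests on the equality $\|B\|_A=w_A(B)$ for $A$-self-adjoint $B$, together with the fact that each $\operatorname{Re}_A(e^{i\theta}X)$ is $A$-self-adjoint (indeed $A\operatorname{Re}_A(e^{i\theta}X)=\tfrac12\big(e^{i\theta}AX+e^{-i\theta}X^{*}A\big)$ is self-adjoint). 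The crux is then the following one-line algebraic consequence of the hypothesis $(TS)^{\sharp_A}=T^{\sharp_A}S$: for each $\theta\in\mathbb R$,
\[\operatorname{Re}_A\big(e^{i\theta}TS\big)=\tfrac12\big(e^{i\theta}TS+e^{-i\theta}(TS)^{\sharp_A}\big)=\tfrac12\big(e^{i\theta}T+e^{-i\theta}T^{\sharp_A}\big)S=\operatorname{Re}_A\big(e^{i\theta}T\big)\,S.\]
Taking the $A$-operator semi-norm, using submultiplicativity $\|BC\|_A\le\|B\|_A\|C\|_A$, and then taking the supremum over $\theta$, I obtain
\[w_A(TS)=\sup_{\theta}\big\|\operatorname{Re}_A(e^{i\theta}T)\,S\big\|_A\le\|S\|_A\,\sup_{\theta}\big\|\operatorname{Re}_A(e^{i\theta}T)\big\|_A=\|S\|_A\,w_A(T).\]
The main obstacle here is recognizing the displayed factorization $\operatorname{Re}_A(e^{i\theta}TS)=\operatorname{Re}_A(e^{i\theta}T)\,S$; once it is available the rest is immediate.

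If one prefers to avoid the $\operatorname{Re}_A$-characterization in part (ii), an equivalent route is to first extract from $(TS)^{\sharp_A}=T^{\sharp_A}S$ the pointwise identity $\langle TSx,x\rangle_A=\langle Tx,Sx\rangle_A$ for all $x\in\mathcal H$ --- here one uses that $T\in\mathcal B_A(\mathcal H)$ maps $N(A)$ into $N(A)$ and that $(T^{\sharp_A})^{\sharp_A}=P_ATP_A$ acts as $T$ inside the form $\langle\cdot,\cdot\rangle_A$ --- and then, for a unit vector $x$ and $\theta\in\mathbb R$, bound $\operatorname{Re}\big(e^{i\theta}\langle TSx,x\rangle_A\big)=\langle\operatorname{Re}_A(e^{i\theta}T)\,Sx,x\rangle_A$ using the Cauchy--Schwarz inequality and $\|\operatorname{Re}_A(e^{i\theta}T)\|_A\le w_A(T)$; this is the same computation in a different guise.
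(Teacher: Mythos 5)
Your proof is correct. For part (i) you follow exactly the paper's route: the polarization identity $4TS=(T+S)^2-(T-S)^2$ (where commutativity enters), subadditivity of $w_A$, the power inequality $w_A(X^2)\le w_A^2(X)$, and normalization to $w_A(T)=w_A(S)=1$; the only difference is that you explicitly dispose of the degenerate cases $w_A(T)=0$ or $w_A(S)=0$ (showing $AT=O$, resp.\ $AS=O$, so that both sides vanish), a point the paper's ``without loss of generality'' silently assumes away, and your remark that the estimate is not correctly homogeneous, so the normalization is genuinely needed, matches the paper's implicit use of it. For part (ii) the paper gives no argument at all --- it simply cites Zamani's Corollary 3.3 --- whereas you supply a complete proof via the characterization $w_A(X)=\sup_{\theta}\left\|\Re_A(e^{i\theta}X)\right\|_A$ and the factorization $\Re_A(e^{i\theta}TS)=\Re_A(e^{i\theta}T)S$, which uses the hypothesis $(TS)^{\sharp_A}=T^{\sharp_A}S$ together with $(\lambda T)^{\sharp_A}=\overline{\lambda}\,T^{\sharp_A}$; this is sound (and is essentially the argument behind the cited result), and your alternative pointwise route via $\langle TSx,x\rangle_A=\langle Tx,Sx\rangle_A$, justified through $(T^{\sharp_A})^{\sharp_A}=P_ATP_A$ and $T(N(A))\subseteq N(A)$, is also correct. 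So your write-up proves strictly more than the paper does, at the cost of reproving a known lemma; citing the known facts $\|B\|_A=w_A(B)$ for $A$-self-adjoint $B$ and the power inequality, as you do, is legitimate since both are established in the semi-Hilbertian literature the paper relies on.
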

	\begin{proof}
	(i)~ Without loss of generality we assume that $w_{A}(T)=1$ and  $w_{A}(S)=1.$  Now, we have that
		\begin{align*}
			w_{A}(TS) &= w_{A}\left( \frac{1}{4}\left((T+S)^2-(T-S)^2\right)\right)\\
			& \leq \frac{1}{4}\left(w_{A}((T+S)^2)+w_{A}((T-S)^2)\right)\\
			& \leq \frac{1}{4}\left(w_{A}^2(T+S)+w_{A}^2(T-S)\right),~\mbox{as $w_{A}(T^n) \leq w_{A}^{n}(T)~~ \forall n \in \mathbb{N}$} \\ & \leq \frac{1}{4}\left( (w_{A}(T)+w_{A}(S))^2+(w_{A}(T)+w_{A}(S))^2\right) \\&= 2.
		\end{align*}
	   This completes the proof.\\
	   (ii)~See in \cite[Cor. 3.3]{zamani}.
	\end{proof}

		\begin{theorem}\label{theo4}
			Let $T,S \in \mathcal{B}_{A}(\mathcal{H}).$ Then the following inequalities hold:\\
		       (i)~~If $TS=ST$ and $\alpha \neq 1,$ then  $$\left\|TS\right\|_{A_\alpha} \leq \sqrt{\left({4\alpha}+\frac{1}{1-\alpha}\right)} \left\|T\right\|_{A_\alpha}\left\|S\right\|_{A_\alpha}.$$
		       (ii)~~If $(TS)^{\sharp_A}=T^{\sharp_A}S$ and $\alpha \neq 1,$ then  $$\left\|TS\right\|_{A_\alpha} \leq \sqrt{{1+\alpha}}\min\left\{ {2},\frac{1}{\sqrt{1-\alpha}} \right\} \left\|T\right\|_{A_\alpha}\left\|S\right\|_{A_\alpha}.$$
			\end{theorem}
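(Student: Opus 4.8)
The plan is to begin, in both parts, from the same elementary estimate already used in the proof of Theorem \ref{theo3}: for any $x\in\mathcal{H}$ with $\|x\|_A=1$ one has $\alpha|\langle TSx,x\rangle_A|^2+(1-\alpha)\|TSx\|_A^2\le \alpha w_A^2(TS)+(1-\alpha)\|TS\|_A^2$, so that
\[\left\|TS\right\|_{A_\alpha}^2 \le \alpha\, w_A^2(TS)+(1-\alpha)\,\|TS\|_A^2 .\]
After this, everything reduces to feeding in the right bound for $w_A(TS)$ from Lemma \ref{lemma1}, together with the submultiplicativity $\|TS\|_A\le\|T\|_A\|S\|_A$, and then converting $w_A$ and $\|\cdot\|_A$ back into $\|\cdot\|_{A_\alpha}$ by means of Theorem \ref{theo1} (namely $w_A(R)\le\|R\|_{A_\alpha}$, $\|R\|_A\le\frac{1}{\sqrt{1-\alpha}}\|R\|_{A_\alpha}$ since $\alpha\ne1$, and $\|R\|_A\le 2w_A(R)$). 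The hypotheses $T,S\in\mathcal{B}_A(\mathcal{H})$ are exactly what is needed to invoke Lemma \ref{lemma1}.

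For part (i): since $TS=ST$, Lemma \ref{lemma1}(i) gives $w_A(TS)\le 2w_A(T)w_A(S)$, so the display above becomes $\left\|TS\right\|_{A_\alpha}^2\le 4\alpha\, w_A^2(T)w_A^2(S)+(1-\alpha)\|T\|_A^2\|S\|_A^2$. I would then apply $w_A(T)\le\|T\|_{A_\alpha}$ and $w_A(S)\le\|S\|_{A_\alpha}$ to the first term and $\|T\|_A\le\frac{1}{\sqrt{1-\alpha}}\|T\|_{A_\alpha}$, $\|S\|_A\le\frac{1}{\sqrt{1-\alpha}}\|S\|_{A_\alpha}$ to the second; the factor $(1-\alpha)$ cancels one of the two $\frac{1}{1-\alpha}$'s, yielding $\left\|TS\right\|_{A_\alpha}^2\le\left(4\alpha+\frac{1}{1-\alpha}\right)\|T\|_{A_\alpha}^2\|S\|_{A_\alpha}^2$, and taking square roots finishes (i).

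For part (ii): since $(TS)^{\sharp_A}=T^{\sharp_A}S$, Lemma \ref{lemma1}(ii) gives $w_A(TS)\le w_A(T)\|S\|_A$, hence $\left\|TS\right\|_{A_\alpha}^2\le\|S\|_A^2\left(\alpha\, w_A^2(T)+(1-\alpha)\|T\|_A^2\right)$. Using $w_A(T)\le\|T\|_{A_\alpha}$ and $\|T\|_A\le\frac{1}{\sqrt{1-\alpha}}\|T\|_{A_\alpha}$, the bracketed factor is at most $(1+\alpha)\|T\|_{A_\alpha}^2$, so $\left\|TS\right\|_{A_\alpha}^2\le(1+\alpha)\|T\|_{A_\alpha}^2\|S\|_A^2$. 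From here I would split according to the minimum: estimating $\|S\|_A^2\le\frac{1}{1-\alpha}\|S\|_{A_\alpha}^2$ produces the constant $\frac{\sqrt{1+\alpha}}{\sqrt{1-\alpha}}$, while estimating $\|S\|_A^2\le 4w_A^2(S)\le 4\|S\|_{A_\alpha}^2$ produces the constant $2\sqrt{1+\alpha}$; combining the two and taking square roots yields the asserted bound.

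There is no genuine conceptual obstacle here; the only thing to watch is the order in which Theorem \ref{theo1} is used, so that in part (i) exactly one factor of $(1-\alpha)^{-1}$ survives and in part (ii) none is wasted. The substantive work is done entirely by Lemma \ref{lemma1} and the submultiplicativity of $\|\cdot\|_A$, and once those are in hand both inequalities are a bookkeeping exercise.
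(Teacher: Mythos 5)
Your proposal is correct and follows essentially the same route as the paper: bound $\left\|TS\right\|_{A_\alpha}^2$ by $\alpha w_A^2(TS)+(1-\alpha)\|TS\|_A^2$, apply Lemma \ref{lemma1}(i) or (ii) together with submultiplicativity of $\|\cdot\|_A$, and then convert back via Theorem \ref{theo1} exactly as you describe, including the split giving the minimum in part (ii). No gaps.
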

	
	    \begin{proof}
	    	(i)~From the definition of $A_\alpha$-semi-norm we get, 
	    	\begin{align*}
	    		\left\|TS\right\|^2_{A_\alpha}  & =\sup_{\|x\|_{A}=1} \left\{ \alpha|\langle TSx,x\rangle_{A}|^2+(1-\alpha) \|TSx\|_{A}^2 \right\}\\ &  \leq \alpha w_{A}^2(TS) + (1-\alpha) \left\|TS\right\|_{A}^2 \\ & \leq  4\alpha w_{A}^2(T)w_{A}^2(S)+ (1-\alpha) \left\|T\right\|_{A}^2\left\|S\right\|_{A}^2, \mbox{by Lemma \ref{lemma1}(i)}\\ & \leq \left({4\alpha}+\frac{1}{1-\alpha}\right)\left\|T\right\|^2_{A_\alpha}\left\|S\right\|^2_{A_\alpha},~~\mbox{by Theorem \ref{theo1}}.
	        \end{align*}
            Therefore, 
            $$\left\|TS\right\|_{A_\alpha} \leq \sqrt{\left({4\alpha} +\frac{1}{1-\alpha}\right)} \left\|T\right\|_{A_\alpha}\left\|S\right\|_{A_\alpha}.$$
        	(ii)~Using the definition of $A_\alpha$-semi-norm we get,
        	\begin{align*}
        		\left\|TS\right\|^2_{A_\alpha}  & = \sup_{\|x\|_{A}=1} \left\{ \alpha|\langle TSx,x\rangle_{A}|^2+(1-\alpha) \|TSx\|_{A}^2 \right\}\\ & \leq \alpha w_{A}^2(TS) + (1-\alpha) \left\|TS\right\|^2_{A} \\ & \leq \alpha w_{A}^2(T)\left\|S\right\|^2_{A}+ (1-\alpha) \left\|T\right\|^2_{A}\left\|S\right\|^2_{A}, \mbox{by Lemma \ref{lemma1}(ii)}\\ & \leq  {\alpha} \left\|T\right\|^2_{A_\alpha}\left\|S\right\|^2_{A}+\left\|T\right\|^2_{A_\alpha}\left\|S\right\|^2_{A},~~\mbox{by Theorem \ref{theo1}}\\ & = \left({\alpha} +1\right)\left\|T\right\|^2_{A_\alpha}\left\|S\right\|^2_{A}\\ & \leq \left({\alpha}+1\right)\min\left\{ {2},\frac{1}{\sqrt{1-\alpha}} \right\}^2\left\|T\right\|^2_{A_\alpha}\left\|S\right\|^2_{A_\alpha},~~\mbox{by Theorem \ref{theo1}}.
        	\end{align*}
          Therefore,
           $$\left\|TS\right\|_{A_\alpha} \leq \sqrt{\left( {\alpha}+1\right)}\min\left\{ {2},\frac{1}{\sqrt{1-\alpha}} \right\} \left\|T\right\|_{A_\alpha}\left\|S\right\|_{A_\alpha}.$$
	    \end{proof}

   \noindent 
   
   Next we need the following two  notations: For  $T \in \mathcal{B}_{A}(\mathcal{H}),$ let $\Re_{A}(T)=\frac{1}{2}(T+T^{\sharp_{A}})$ and  $\Im_{A}(T)=\frac{1}{2\rm i}(T-T^{\sharp_{A}})$. Then, $T$ can be expressed as $T=\Re_A(T)+\rm i \Im_{A}(T)$. By using this decomposition we prove the following inequalities.
    
    \begin{theorem}\label{theo5}
    		Let $T \in \mathcal{B}_{A}(\mathcal{H}).$ Then the following inequalities  hold :
    			\begin{align*}
    			&(i)~~ \left\|T\right\|_{A_\alpha}^2 \geq \max	\Bigg\{\frac{1}{2}\left\|\frac{\alpha}{4}(T^{\sharp_A}T+TT^{\sharp_A})+(1-\alpha) T^{\sharp_A}T\right\|_{A},\\
    			& \,\,\,\,\,\,\,\,\,\,\,\,\,\,\,\,\,\,\,\,\,\,\,\,\,\,\,\,\,\,\,\,\,\,\,\,\,\,\,\,\,\,\,\,\,\,\,\,\,\,\,\,\,\,\,\,\,\,\,\,\,\,\,\,\,\,\,\,\,\,\,\,\,\,\,\,\,\,\,\,\,\,\,\,\,\,\,\,\,\,\,\,\,\,\,\,\,\,\, \frac{1}{3}\left\|\frac{\alpha}{2}(T^{\sharp_A}T+TT^{\sharp_A})+(1-\alpha) T^{\sharp_A}T\right\|_{A}\Bigg\},\\
    		  	&(ii)~~ \left\|T\right\|_{A_\alpha}^2 \leq  \left\|\frac{\alpha}{2}(T^{\sharp_A}T+TT^{\sharp_A})+(1-\alpha) T^{\sharp_{A}}T\right\|_{A},\\
      		&(iii)~~ w_{A}^2(T) \leq   \min_{\alpha} \left\|\frac{\alpha}{2}(T^{\sharp_A}T+TT^{\sharp_A})+(1-\alpha) T^{\sharp_A}T\right\|_{A} \leq  \frac{1}{2}\left\|T^{\sharp_A}T+TT^{\sharp_A}\right\|_{A}.
      	\end{align*}
    \end{theorem}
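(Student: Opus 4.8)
The three estimates all revolve around the single scalar identity
\[
\alpha|\langle Tx,x\rangle_A|^2 + (1-\alpha)\|Tx\|_A^2
= \alpha\,\Re\langle Tx,x\rangle_A^2 + \alpha\,\Im\langle Tx,x\rangle_A^2 + (1-\alpha)\langle T^{\sharp_A}Tx,x\rangle_A ,
\]
valid for unit vectors $x$ (in the $\|\cdot\|_A$ semi-norm), together with the observation that $\Re\langle Tx,x\rangle_A = \langle \Re_A(T)x,x\rangle_A$ and $\Im\langle Tx,x\rangle_A = \langle \Im_A(T)x,x\rangle_A$. The plan is to first rewrite $\|T\|_{A_\alpha}^2$ using this and then push the quadratic terms into operator semi-norms via the relation $\langle Sx,x\rangle_A^2 \le \langle S^2x,x\rangle_A$ for $A$-self-adjoint $S$ (Cauchy--Schwarz in the semi-inner product, using $S=S^{\sharp_A}$ on $\overline{R(A)}$), noting $\Re_A(T)^2+\Im_A(T)^2 = \tfrac12(T^{\sharp_A}T + TT^{\sharp_A})$, which is $A$-self-adjoint.

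For (ii), the upper bound, I would bound $\alpha\langle\Re_A(T)x,x\rangle_A^2 \le \alpha\langle\Re_A(T)^2x,x\rangle_A$ and similarly for the imaginary part, so that
\[
\alpha|\langle Tx,x\rangle_A|^2 + (1-\alpha)\|Tx\|_A^2 \le \Big\langle\big(\tfrac{\alpha}{2}(T^{\sharp_A}T+TT^{\sharp_A}) + (1-\alpha)T^{\sharp_A}T\big)x,x\Big\rangle_A \le \big\|\tfrac{\alpha}{2}(T^{\sharp_A}T+TT^{\sharp_A}) + (1-\alpha)T^{\sharp_A}T\big\|_A,
\]
the last step because the operator in question is $A$-self-adjoint and positive-semidefinite in the $A$-sense (it equals $\alpha\Re_A(T)^2 + \alpha\Im_A(T)^2 + (1-\alpha)T^{\sharp_A}T$, a sum of $A$-positive operators), so its $A$-numerical radius equals its $A$-operator semi-norm; taking the supremum over unit $x$ finishes it.

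For (iii), the inequality $w_A^2(T)\le \|\tfrac{\alpha}{2}(T^{\sharp_A}T+TT^{\sharp_A}) + (1-\alpha)T^{\sharp_A}T\|_A$ for every $\alpha$ follows by the same Cauchy--Schwarz step applied to $|\langle Tx,x\rangle_A|^2 = \langle\Re_A(T)x,x\rangle_A^2 + \langle\Im_A(T)x,x\rangle_A^2 \le \langle(\Re_A(T)^2+\Im_A(T)^2)x,x\rangle_A$ and then convexity to insert the $(1-\alpha)T^{\sharp_A}T$ term (or simply bounding against the full expression after noting $\tfrac12(T^{\sharp_A}T+TT^{\sharp_A}) \ge_A$ nothing needed — one just uses that adding the $A$-positive operator $(1-\alpha)(T^{\sharp_A}T - \tfrac12(T^{\sharp_A}T+TT^{\sharp_A}))$... ); minimizing over $\alpin[0,1]$ gives the middle quantity, and putting $\alpha=1$ recovers $\tfrac12\|T^{\sharp_A}T+TT^{\sharp_A}\|_A$ as an admissible (hence $\ge$ the min) value, giving the last inequality. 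For (i), the lower bounds, I would go the other direction: use that for an $A$-self-adjoint operator $S$ with $R(S)\subseteq\overline{R(A)}$ one has $\|S\|_A = \sup_{\|x\|_A=1}|\langle Sx,x\rangle_A|$ combined with the power-type estimates $\langle Sx,x\rangle_A \le \langle S^2x,x\rangle_A^{1/2}$ run backwards is not available, so instead I would exploit a known refinement of the form $\|S\|_A \le 2\sup\langle S x,x\rangle_A$-type or the "$\tfrac12$" and "$\tfrac13$" constants coming from writing $\langle\Re_A(T)^2 x,x\rangle_A + \langle\Im_A(T)^2x,x\rangle_A \le$ (something involving $|\langle Tx,x\rangle_A|$ and $\|Tx\|_A$) via the parallelogram/polarization trick, taking suprema, and comparing with $w_A$. \emph{The main obstacle} is precisely establishing the correct numerical constants $\tfrac12$ and $\tfrac13$ in (i): the clean Cauchy--Schwarz argument gives the upper bounds effortlessly, but the lower bounds require controlling $\|\alpha\Re_A(T)^2 + \alpha\Im_A(T)^2 + (1-\alpha)T^{\sharp_A}T\|_A$ from above by a constant times $\|T\|_{A_\alpha}^2$, which forces one to relate $\langle S^2 x,x\rangle_A$ back to $\langle Sx,x\rangle_A^2$ — false pointwise — so one must instead use a global bound such as $\|S^2\|_A = \|S\|_A^2 \le 4 w_A^2(S) \le$ (comparison with the $A_\alpha$-norm via Theorem~\ref{theo1}), and carefully optimize the resulting constants, which is where the $\tfrac12$ and $\tfrac13$ emerge.
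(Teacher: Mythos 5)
Your arguments for parts (ii) and (iii) are essentially the paper's: the Cauchy--Schwarz step $|\langle Tx,x\rangle_A|^2=\langle \Re_A(T)x,x\rangle_A^2+\langle \Im_A(T)x,x\rangle_A^2\le \langle(\Re_A^2(T)+\Im_A^2(T))x,x\rangle_A$ together with $\Re_A^2(T)+\Im_A^2(T)=\tfrac12(T^{\sharp_A}T+TT^{\sharp_A})$ gives (ii) (the paper decomposes $T^{\sharp_A}$ instead of $T$ and carries a harmless $(\cdot)^{\sharp_A}$ through, but that is only a cosmetic difference), and (iii) then follows from $w_A(T)\le\|T\|_{A_\alpha}$ (Theorem \ref{theo1}) plus the choice $\alpha=1$, exactly as in the paper; your ``convexity'' remark is an acceptable alternative to invoking Theorem \ref{theo1}, once cleaned up.

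Part (i), however, is a genuine gap: you explicitly concede that you cannot produce the constants $\tfrac12$ and $\tfrac13$, and the route you sketch (a global bound of the type $\|S\|_A^2\le 4w_A^2(S)$ followed by comparison with $\|T\|_{A_\alpha}$ via Theorem \ref{theo1}) is not the mechanism and will not give these constants -- indeed for $A$-self-adjoint $S$ one has $w_A(S)=\|S\|_A$, so the factor $2$ there buys nothing. The actual argument runs as follows. Write $S_\pm=\Re_A(T^{\sharp_A})\pm\Im_A(T^{\sharp_A})$, which are $A$-self-adjoint. From $a^2+b^2\ge\tfrac12(a+b)^2$ one gets the pointwise bound $|\langle Tx,x\rangle_A|^2\ge\tfrac12|\langle S_\pm x,x\rangle_A|^2$; taking the supremum over $A$-unit vectors and using the two facts valid for $A$-self-adjoint operators, $w_A(S_\pm)=\|S_\pm\|_A$ and $\|S_\pm\|_A^2=\|S_\pm^2\|_A$, yields
\[
\sup_{\|x\|_A=1}\alpha|\langle Tx,x\rangle_A|^2\;\ge\;\sup_{\|x\|_A=1}\Big\langle \tfrac{\alpha}{2}S_\pm^2\,x,x\Big\rangle_A
\;\ge\;\sup_{\|x\|_A=1}\Big\langle \tfrac{\alpha}{4}\big(S_+^2+S_-^2\big)x,x\Big\rangle_A
=\sup_{\|x\|_A=1}\Big\langle \tfrac{\alpha}{2}\big(\Re_A^2(T^{\sharp_A})+\Im_A^2(T^{\sharp_A})\big)x,x\Big\rangle_A,
\]
by averaging the two sign choices ($S_+^2+S_-^2=2(\Re_A^2+\Im_A^2)$). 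Separately, $\|T\|_{A_\alpha}^2\ge\sup_{\|x\|_A=1}(1-\alpha)\langle T^{\sharp_A}Tx,x\rangle_A$. Adding the two estimates (each individually bounded by $\|T\|_{A_\alpha}^2$) gives $2\|T\|_{A_\alpha}^2\ge\big\|\tfrac{\alpha}{4}(T^{\sharp_A}T+TT^{\sharp_A})+(1-\alpha)T^{\sharp_A}T\big\|_{A}$, i.e.\ the constant $\tfrac12$; using the $\alpha$-estimate twice and the $(1-\alpha)$-estimate once gives $3\|T\|_{A_\alpha}^2\ge\big\|\tfrac{\alpha}{2}(T^{\sharp_A}T+TT^{\sharp_A})+(1-\alpha)T^{\sharp_A}T\big\|_{A}$, i.e.\ the constant $\tfrac13$. (The passage from the supremum of the $A$-quadratic form to the $A$-seminorm uses again that the operator inside is $A$-self-adjoint.) Without this splitting-and-adding of suprema and the identities $w_A(S)=\|S\|_A$, $\|S^2\|_A=\|S\|_A^2$ for $A$-self-adjoint $S$, the lower bounds in (i) do not follow from what you wrote.
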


    \begin{proof}
    	We observe that 
    	 $(\Re_{A} (T^{\sharp_A}))^{\sharp_A}=\Re_{A} (T^{\sharp_A})$ and $(\Im_{A} (T^{\sharp_A}))^{\sharp_A}=\Im_{A} (T^{\sharp_A}),$ and  
    	\begin{align*}
    	\Re_{A}^2 (T^{\sharp_A})+\Im_{A}^2 (T^{\sharp_A})=\frac{T^{\sharp_A}(T^{\sharp_A})^{\sharp_A}+(T^{\sharp_A})^{\sharp_A}T^{\sharp_A}}{2}= \left(\frac{T^{\sharp_A}T+TT^{\sharp_A}}{2}\right)^{\sharp_A}.
       \end{align*}
    Let us consider $x \in \mathcal{H} $ with $\|x\|_{A}=1.$ Then, we have
    	\begin{align*}
    		|\langle Tx,x\rangle_{A}|^2 &=  |\langle x,Tx\rangle_{A}|^2 \\ & = |\langle T^{\sharp_A}x,x\rangle_{A}|^2 \\ & = |\langle \left(\Re_{A} (T^{\sharp_A})+ i\Im_{A} (T^{\sharp_A})\right)x,x\rangle_{A}|^2 \\ & =  |\langle \Re_{A} (T^{\sharp_A})x,x\rangle_{A}|^2 +
    		|\langle \Im_{A} (T^{\sharp_A})x,x\rangle_{A}|^2  \\ & \geq \frac{1}{2}|\langle (\Re_{A} (T^{\sharp_A}) \pm \Im_{A} (T^{\sharp_A}))x,x\rangle_{A}|^2.
        \end{align*}
     Taking supremum over all $x$ in $\mathcal{H}$ with $\|x\|_{A}=1$, we get that
     \begin{align*}
     &	\sup_{\|x\|_{A}=1} \alpha |\langle Tx,x\rangle_{A}|^2 \\
     	 & \geq \sup_{\|x\|_{A}=1} \frac{ \alpha}{2}|\langle (\Re_{A} (T^{\sharp_A}) \pm \Im_{A} (T^{\sharp_A}))x,x\rangle_{A}|^2 \\
     	 & = \frac{ \alpha}{2} \left\| \Re_{A} (T^{\sharp_A}) \pm \Im_{A} (T^{\sharp_A})\right\|_A^2 \\ 
     	& = \frac{ \alpha}{2} \left\| (\Re_{A} (T^{\sharp_A}) \pm \Im_{A} (T^{\sharp_A}))^2\right\|_A,\,\,\textit{since}~~\Re_{A} (T^{\sharp_A}) \pm \Im_{A} (T^{\sharp_A})~~\textit{is A-self-adjoint} \\ 
     	& = \sup_{\|x\|_{A}=1} \langle \frac{ \alpha}{2} (\Re_{A} (T^{\sharp_A}) \pm \Im_{A} (T^{\sharp_A}))^2x,x\rangle_{A} \\ & \geq \sup_{\|x\|_{A}=1} \langle \frac{ \alpha}{4} \left((\Re_{A} (T^{\sharp_A}) + \Im_{A} (T^{\sharp_A}))^2+(\Re_{A} (T^{\sharp_A}) - \Im_{A} (T^{\sharp_A}))^2\right)x,x\rangle_{A} \\ & \geq \sup_{\|x\|_{A}=1} \langle \frac{ \alpha}{2} (\Re_{A}^2 (T^{\sharp_A}) + \Im_{A}^2 (T^{\sharp_A})^2)x,x\rangle_{A}.
     \end{align*}
     Hence, $$\left\|T\right\|_{A_\alpha}^2 \geq \sup_{\|x\|_{A}=1} \langle \frac{ \alpha}{2} (\Re_{A}^2 (T^{\sharp_A}) + \Im_{A}^2 (T^{\sharp_A}))x,x\rangle_{A}. $$
     Also we have that, $$\left\|T\right\|_{A_\alpha}^2 \geq \sup_{\|x\|_{A}=1} \langle (1-\alpha) T^{\sharp_A}Tx,x\rangle_{A}.$$
     Therefore,
     \begin{align*}
     	2\left\|T\right\|_{A_\alpha}^2  & \geq \sup_{\|x\|_{A}=1} \left\langle \left(\frac{ \alpha}{2} (\Re_{A}^2 (T^{\sharp_A}) + \Im_{A}^2 (T^{\sharp_A}))+(1-\alpha) T^{\sharp_A}T\right)x,x\right\rangle_{A} \\ & = \left\|\frac{ \alpha}{2} (\Re_{A}^2 (T^{\sharp_A}) + \Im_{A}^2 (T^{\sharp_A}))+(1-\alpha) T^{\sharp_A}T\right\|_{A} \\ & =  \left\|\frac{ \alpha}{2} \left(\frac{T^{\sharp_A}T+TT^{\sharp_A}}{2}\right)^{\sharp_A}+(1-\alpha) (T^{\sharp_A}T)^{\sharp_A}\right\|_{A} \\ & = \left\|\frac{\alpha}{4}(T^{\sharp_A}T+TT^{\sharp_A})+(1-\alpha) T^{\sharp_A}T\right\|_{A}. 
    \end{align*}
     Hence,
     \begin{align}\label{eqn5}
     	\left\|T\right\|_{A_\alpha}^2 \geq \frac{1}{2}\left\|\frac{\alpha}{4}(T^{\sharp_A}T+TT^{\sharp_A})+(1-\alpha) T^{\sharp_A}T\right\|_{A}.
     \end{align}
    Again, by using similar arguments as above, we have that
     \begin{align*}
     	3\left\|T\right\|_{A_\alpha}^2 & \geq  \sup_{\|x\|_{A}=1} \left\langle \left( \alpha (\Re_{A}^2 (T^{\sharp_A}) + \Im_{A}^2 (T^{\sharp_A}))+(1-\alpha) T^{\sharp_A}T\right)x,x\right\rangle_{A} \\ & = \left\|\frac{\alpha}{2}(T^{\sharp_A}T+TT^{\sharp_A})+(1-\alpha) T^{\sharp_A}T\right\|_{A}. 
     \end{align*}
    Hence,
    \begin{align}\label{eqn6}
    	\left\|T\right\|_{A_\alpha}^2 \geq \frac{1}{3}\left\|\frac{\alpha}{2}(T^{\sharp_A}T+TT^{\sharp_A})+(1-\alpha) T^{\sharp_A}T\right\|_{A}.
    \end{align}
    Combining (\ref{eqn5}) and (\ref{eqn6}) we get the inequality  (i).
    Now we prove the inequality  (ii). We have
     \begin{align*}
     	 |\langle Tx,x\rangle_{A}|^2 & = |\langle \Re_{A} (T^{\sharp_A})x,x\rangle_{A}|^2 +
     	|\langle \Im_{A} (T^{\sharp_A})x,x\rangle_{A}|^2 \\ & \leq \|\Re_{A} (T^{\sharp_A})x\|_A^2+\|\Im_{A} (T^{\sharp_A})x\|_A^2 \\ & =  \langle \Re_{A}^2 (T^{\sharp_A})x,x\rangle_{A} +
     	\langle \Im_{A}^2 (T^{\sharp_A})x,x\rangle_{A} \\ &=  \langle (\Re_{A}^2 (T^{\sharp_A})+\Im_{A}^2 (T^{\sharp_A}))x,x\rangle_{A}. 
     \end{align*}
     So, 
     \begin{align*}
     	 &\,\, \alpha|\langle Tx,x\rangle_{A}|^2+(1-\alpha)\|Tx\|_{A}^2 \\
     	 & \leq \langle \alpha(\Re_{A}^2 (T^{\sharp_A})+\Im_{A}^2 (T^{\sharp_A}))x,x\rangle_{A} +\langle (1-\alpha) T^{\sharp_A}Tx,x\rangle_{A} \\ & = \left\langle \left(\alpha (\Re_{A}^2 (T^{\sharp_A}) + \Im_{A}^2 (T^{\sharp_A}))+(1-\alpha) T^{\sharp_A}T \right)x,x\right\rangle_{A} \\ & \leq   \left\| \alpha (\Re_{A}^2 (T^{\sharp_A}) + \Im_{A}^2 (T^{\sharp_A}))+(1-\alpha) T^{\sharp_A}T\right\|_{A}  \\ & =  \left\| \alpha \left(\frac{T^{\sharp_A}T+TT^{\sharp_A}}{2}\right)^{\sharp_A}+(1-\alpha) (T^{\sharp_A}T)^{\sharp_A}\right\|_{A} \\ & = \left\|\frac{\alpha}{2}(T^{\sharp_A}T+TT^{\sharp_A})+(1-\alpha) T^{\sharp_A}T\right\|_{A}.
     \end{align*}
    Therefore, by taking supremum over all $x \in \mathcal{H}$ with $\|x\|_{A}=1,$ we get the inequality (ii).
     Now, using Theorem \ref{theo1} in the inequality (ii), we get $$ w_{A}^2(T) \leq \left\|\frac{\alpha}{2}(T^{\sharp_A}T+TT^{\sharp_A})+(1-\alpha) T^{\sharp_A}T\right\|_{A}.$$
     This holds for all $\alpha$, so
     considering minimum over $\alpha$, we get that
     $$w_{A}^2(T) \leq \min_{\alpha}  \left\|\frac{\alpha}{2}(T^{\sharp_A}T+TT^{\sharp_A})+(1-\alpha) T^{\sharp_A}T\right\|_{A}.$$
     This is the first inequality in (iii).
     The second inequality in (iii) follows from the case $\alpha=1.$
    \end{proof}
    
    \begin{remark}\label{pr}
    	Zamani in \cite[ Theorem 2.10 ]{zamani} proved that for $T \in \mathcal{B}_{A}(\mathcal{H}) $ , $$w_{A}^2(T) \leq \frac{1}{2}\|TT^{\sharp_A}+T^{\sharp_A}T\|_{A}.$$
    	Clearly, the first inequality obtained in Theorem \ref{theo5}(iii) improves on \cite[ Theorem 2.10 ]{zamani}. Here we consider an example to show proper improvement. Let $T=\left(\begin{matrix}
    	0&0&0\\
    	2&0&0\\
    	0&1&0
    	\end{matrix}  \right)$ and $A=\left(\begin{matrix}
    	1&0&0\\
    	0&1&0\\
    	0&0&2
    	\end{matrix}  \right).$ Then, simple calculations show that   $$ \left\|\frac{\alpha}{2}(T^{\sharp_A}T+TT^{\sharp_A})+(1-\alpha) T^{\sharp_A}T\right\|_{A} =\frac{23}{8} \,\, \, \text{for}\,\, \, \alpha=\frac{7}{8}$$ and $$  \frac{1}{2}\left\|T^{\sharp_A}T+TT^{\sharp_A}\right\|_{A}=3.$$ Hence, for the matrices $T$ and $A$,
    	$$ \min_{\alpha} \left\|\frac{\alpha}{2}(T^{\sharp_A}T+TT^{\sharp_A})+(1-\alpha) T^{\sharp_A}T\right\|_{A} < \frac{1}{2}\left\|T^{\sharp_A}T+TT^{\sharp_A}\right\|_{A}.$$

    \end{remark}
    
    To prove next theorem first we recall  the following lemma.
    
    \begin{lemma}$($\cite[Lemma 1]{ab}$)$\label{lemma2}
    	Let $a,b,e \in \mathcal{H} $ with $\|e\|_{A}=1.$ Then,
    	$$|\langle a,e \rangle_{A}\langle e,b \rangle_{A}| \leq \frac{1}{2} \left(|\langle a,b \rangle_{A}|+\|a\|_{A}\|b\|_{A}\right).$$
    \end{lemma}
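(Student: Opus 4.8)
This is the semi-Hilbertian analogue of Buzano's inequality, and the plan is to reproduce the classical ``reflection'' proof, with the rank-one orthogonal projection onto $\mathbb{C}e$ replaced by its $A$-version. Throughout, the only analytic input is the Cauchy--Schwarz inequality for the semi-inner product $\langle\cdot,\cdot\rangle_A$, which is valid because $\langle\cdot,\cdot\rangle_A$ is a positive semidefinite Hermitian form.

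First I would fix $e$ with $\|e\|_A=1$ and define $P\in\mathcal{B}(\mathcal{H})$ by $Px=\langle x,e\rangle_A\,e$. A one-line computation gives $\langle Px,y\rangle_A=\langle x,e\rangle_A\langle e,y\rangle_A=\langle x,Py\rangle_A$, so $AP=P^{*}A$ (i.e. $P$ is $A$-self-adjoint), and $\|e\|_A=1$ forces $P^{2}=P$. Then I would put $V=2P-I$ and record the two facts that drive the argument: $V^{2}=4P^{2}-4P+I=I$, and $AV=2AP-A=2P^{*}A-A=V^{*}A$. Together these give, for every $x\in\mathcal{H}$,
$$\|Vx\|_A^{2}=\langle AVx,Vx\rangle=\langle V^{*}Ax,Vx\rangle=\langle Ax,V^{2}x\rangle=\langle Ax,x\rangle=\|x\|_A^{2},$$
so $V$ is an $A$-isometry, and in particular $|\langle Vx,y\rangle_A|\le\|Vx\|_A\|y\|_A=\|x\|_A\|y\|_A$ for all $x,y$.

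The conclusion is then immediate: since $Va=2\langle a,e\rangle_A\,e-a$, we have
$$\langle Va,b\rangle_A=2\,\langle a,e\rangle_A\langle e,b\rangle_A-\langle a,b\rangle_A,$$
so applying the $A$-isometry bound on $|\langle Va,b\rangle_A|$ and then the triangle inequality yields $2\,|\langle a,e\rangle_A\langle e,b\rangle_A|\le|\langle a,b\rangle_A|+\|a\|_A\|b\|_A$, which is the claimed inequality after dividing by $2$.

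The point I expect to require the most care is the verification that $A$-self-adjointness of $P$ (hence of $V$) genuinely propagates through the semi-inner product computations above: in the semi-Hilbertian setting $A$-adjoints need not be unique and $\|\cdot\|_A$ is only a seminorm, so one should argue with the honest operator identity $AP=P^{*}A$ rather than manipulate ``$P^{\sharp_A}$'' symbolically; since $P$ is a bounded everywhere-defined operator this causes no real difficulty. As a fallback not using $V$, one can instead write $a=\langle a,e\rangle_A e+a_1$ and $b=\langle b,e\rangle_A e+b_1$ with $\langle a_1,e\rangle_A=\langle b_1,e\rangle_A=0$, use $\langle a,b\rangle_A=\langle a,e\rangle_A\overline{\langle b,e\rangle_A}+\langle a_1,b_1\rangle_A$ together with $\|a_1\|_A^{2}=\|a\|_A^{2}-|\langle a,e\rangle_A|^{2}$ and Cauchy--Schwarz on $\langle a_1,b_1\rangle_A$, and finish from the elementary inequality $\big(\|a\|_A|\langle b,e\rangle_A|-|\langle a,e\rangle_A|\,\|b\|_A\big)^{2}\ge0$; I would nonetheless present the $V$-based argument as the main line, since it is the shortest.
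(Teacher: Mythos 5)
Your argument is correct. Note that the paper itself gives no proof of this lemma: it is simply quoted from \cite[Lemma 1]{ab}, so there is no in-paper argument to compare with. Your reflection proof is a complete, self-contained substitute and all the semi-Hilbertian subtleties are handled properly: the operator $Px=\langle x,e\rangle_A e$ satisfies $AP=P^*A$ and $P^2=P$ because $\|e\|_A=1$, hence $V=2P-I$ satisfies $V^2=I$ and $AV=V^*A$, which gives $\|Vx\|_A=\|x\|_A$ without ever invoking uniqueness of $A$-adjoints or the operator $P^{\sharp_A}$; the Cauchy--Schwarz inequality you use is valid for the positive semidefinite form $\langle\cdot,\cdot\rangle_A$, and the final triangle-inequality step from $\langle Va,b\rangle_A=2\langle a,e\rangle_A\langle e,b\rangle_A-\langle a,b\rangle_A$ is exactly the classical Buzano mechanism. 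Your fallback via the decomposition $a=\langle a,e\rangle_A e+a_1$, $b=\langle b,e\rangle_A e+b_1$ also closes correctly, since the elementary inequality $\bigl(\|a\|_A|\langle b,e\rangle_A|-|\langle a,e\rangle_A|\,\|b\|_A\bigr)^2\ge 0$ yields $\sqrt{(\|a\|_A^2-|\langle a,e\rangle_A|^2)(\|b\|_A^2-|\langle b,e\rangle_A|^2)}\le \|a\|_A\|b\|_A-|\langle a,e\rangle_A\langle e,b\rangle_A|$; either route is acceptable, the $V$-based one being shorter and the decomposition one being closer to the original Hilbert-space proofs of Buzano's inequality.
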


    \begin{theorem}\label{theo6}
    		Let $T \in \mathcal{B}_{A}(\mathcal{H}).$ Then the following inequalities hold :
    		\begin{align*}
    		(i)~~ \left\|T\right\|_{A_\alpha}^2& \leq 	\frac{\alpha}{2}w_{A}(T^2)+\left\|\frac{\alpha}{4}(T^{\sharp_A}T+TT^{\sharp_A})+(1-\alpha) T^{\sharp_A}T\right\|_{A}\\ 
    		(ii)~~ w_{A}^2(T) \leq & \min_{\alpha} \left\{	\frac{\alpha}{2}w_{A}(T^2)+\left\|\frac{\alpha}{4}(T^{\sharp_A}T+TT^{\sharp_A})+(1-\alpha) T^{\sharp_A}T\right\|_{A}\right\}\\ \leq & \frac{1}{2}w_{A}(T^2)+\frac{1}{4}\|T^{\sharp_A}T+TT^{\sharp_A}\|_A.
    		\end{align*}
    \end{theorem}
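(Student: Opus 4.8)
The plan is to derive Theorem~\ref{theo6} from a single pointwise estimate obtained by applying the Buzano-type inequality of Lemma~\ref{lemma2} in the semi-inner product space $(\mathcal{H},\langle\cdot,\cdot\rangle_A)$. First I would fix $x\in\mathcal{H}$ with $\|x\|_A=1$ and, using the defining relation $\langle Ty,z\rangle_A=\langle y,T^{\sharp_A}z\rangle_A$, record the three identities $\langle Tx,x\rangle_A=\langle x,T^{\sharp_A}x\rangle_A$, $\langle Tx,T^{\sharp_A}x\rangle_A=\langle T^2x,x\rangle_A$ and $\|T^{\sharp_A}x\|_A^2=\langle TT^{\sharp_A}x,x\rangle_A$. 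From the first of these, $|\langle Tx,x\rangle_A|^2=|\langle Tx,x\rangle_A\langle x,T^{\sharp_A}x\rangle_A|$, so Lemma~\ref{lemma2} applied with $a=Tx$, $b=T^{\sharp_A}x$, $e=x$ gives $|\langle Tx,x\rangle_A|^2\leq\tfrac12(|\langle Tx,T^{\sharp_A}x\rangle_A|+\|Tx\|_A\|T^{\sharp_A}x\|_A)$. Combining this with the remaining two identities and the bound $2\|Tx\|_A\|T^{\sharp_A}x\|_A\leq\|Tx\|_A^2+\|T^{\sharp_A}x\|_A^2=\langle(T^{\sharp_A}T+TT^{\sharp_A})x,x\rangle_A$ yields the key inequality
\[|\langle Tx,x\rangle_A|^2\leq\tfrac12|\langle T^2x,x\rangle_A|+\tfrac14\langle(T^{\sharp_A}T+TT^{\sharp_A})x,x\rangle_A.\]

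Next I would multiply this by $\alpha$ and add $(1-\alpha)\|Tx\|_A^2=(1-\alpha)\langle T^{\sharp_A}Tx,x\rangle_A$, obtaining
\[\alpha|\langle Tx,x\rangle_A|^2+(1-\alpha)\|Tx\|_A^2\leq\tfrac{\alpha}{2}|\langle T^2x,x\rangle_A|+\big\langle\big(\tfrac{\alpha}{4}(T^{\sharp_A}T+TT^{\sharp_A})+(1-\alpha)T^{\sharp_A}T\big)x,x\big\rangle_A.\]
Estimating the first term by $\tfrac{\alpha}{2}w_A(T^2)$ and the second, through the Cauchy--Schwarz inequality for $\langle\cdot,\cdot\rangle_A$, by $\big\|\tfrac{\alpha}{4}(T^{\sharp_A}T+TT^{\sharp_A})+(1-\alpha)T^{\sharp_A}T\big\|_A$, and then taking the supremum over all unit vectors $x$, gives exactly inequality (i). For (ii), I would invoke Theorem~\ref{theo1}(i), which gives $w_A(T)\leq\|T\|_{A_\alpha}$ and hence $w_A^2(T)\leq\tfrac{\alpha}{2}w_A(T^2)+\big\|\tfrac{\alpha}{4}(T^{\sharp_A}T+TT^{\sharp_A})+(1-\alpha)T^{\sharp_A}T\big\|_A$ for every $\alpha\in[0,1]$; taking the infimum over $\alpha$ produces the first inequality in (ii), and the choice $\alpha=1$ produces the second.

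No restriction on $\alpha$ is needed here, and for $\alpha=1$ inequality (i) recovers the $A$-analogue $w_A^2(T)\leq\tfrac12 w_A(T^2)+\tfrac14\|T^{\sharp_A}T+TT^{\sharp_A}\|_A$ of a classical numerical radius bound, which, in the spirit of Remark~\ref{pr}, is refined by the minimum over $\alpha$. The argument is largely routine once Lemma~\ref{lemma2} is available; I expect the only delicate point to be the careful bookkeeping with the $A$-adjoint in establishing the three identities above and in checking that Lemma~\ref{lemma2} applies verbatim in the (possibly degenerate) semi-inner product space $(\mathcal{H},\langle\cdot,\cdot\rangle_A)$ --- in particular noting that the passage from the supremum to the $A$-operator seminorm uses only Cauchy--Schwarz, so that no $A$-self-adjointness or range condition on $\tfrac{\alpha}{4}(T^{\sharp_A}T+TT^{\sharp_A})+(1-\alpha)T^{\sharp_A}T$ is required.
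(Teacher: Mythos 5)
Your proposal is correct and follows essentially the same route as the paper: both apply Lemma \ref{lemma2} with $a=Tx$, $b=T^{\sharp_A}x$, $e=x$ together with the identity $\langle Tx,x\rangle_A=\langle x,T^{\sharp_A}x\rangle_A$ and the arithmetic--geometric mean bound to get the pointwise estimate $|\langle Tx,x\rangle_A|^2\leq\tfrac12|\langle T^2x,x\rangle_A|+\tfrac14\langle(T^{\sharp_A}T+TT^{\sharp_A})x,x\rangle_A$, then add $(1-\alpha)\langle T^{\sharp_A}Tx,x\rangle_A$, pass to the seminorm, and deduce (ii) from Theorem \ref{theo1} with the infimum over $\alpha$ and the case $\alpha=1$. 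No gaps; your extra remarks on the $A$-adjoint identities and the Cauchy--Schwarz step only make explicit what the paper leaves implicit.
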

    
    \begin{proof}
    	Let $x \in \mathcal{H} $ with $\|x\|_{A}=1.$ Taking $a=Tx,$ $b=T^{\sharp_A}x$ and $e=x$ in Lemma \ref{lemma2}, and using the arithmetic-geometric mean inequality, we get
    	\begin{align*}
    		|\langle Tx,x\rangle_{A}|^2 & \leq \frac{1}{2} \left( |\langle T^2x,x\rangle_{A}| + \| Tx\|_{A} \| T^{\sharp_A}x\|_{A} \right)\\ & \leq \frac{1}{2}|\langle T^2x,x\rangle_{A}| +\frac{1}{2}\langle T^{\sharp_A}Tx,x\rangle_{A}^{1/2}\langle TT^{\sharp_A}x,x\rangle_{A}^{1/2}\\ & \leq \frac{1}{2}|\langle T^2x,x\rangle_{A}|+ \frac{1}{4}\left(  \langle T^{\sharp_A}Tx,x\rangle_{A}+\langle TT^{\sharp_A}x,x\rangle_{A}\right) \\ & = \frac{1}{2}|\langle T^2x,x\rangle_{A}|+\frac{1}{4}\langle (T^{\sharp_A}T+TT^{\sharp_A})x,x\rangle_{A}.
    	\end{align*}
    Therefore,
        \begin{align*}
        	& \alpha|\langle Tx,x\rangle_{A}|^2+(1-\alpha) \|Tx\|_{A}^2 \\ & \leq  \frac{\alpha}{2}|\langle T^2x,x\rangle_{A}|+\left\langle  \left(\frac{\alpha}{4}(T^{\sharp_A}T+TT^{\sharp_A})+(1-\alpha) T^{\sharp_A}T \right)x,x\right\rangle_{A} \\ & \leq \frac{\alpha}{2}w_{A}(T^2)+\left\|\frac{\alpha}{4}(T^{\sharp_A}T+TT^{\sharp_A})+(1-\alpha) T^{\sharp_A}T\right\|_{A}.
        \end{align*}
     Taking supremum over all $x$ in $\mathcal{H}$ with $\|x\|_{A}=1,$ we get
     $$\left\|T\right\|_{A_\alpha}^2 \leq 	\frac{\alpha}{2}w_{A}(T^2)+\left\|\frac{\alpha}{4}(T^{\sharp_A}T+TT^{\sharp_A})+(1-\alpha) T^{\sharp_A}T\right\|_{A}.$$
     This is the inequality in (i).
      Now using Theorem \ref{theo1} in (i), we get
      $$ w_{A}^2(T) \leq 	\frac{\alpha}{2}w_{A}(T^2)+\left\|\frac{\alpha}{4}(T^{\sharp_A}T+TT^{\sharp_A})+(1-\alpha) T^{\sharp_A}T\right\|_{A},$$
      This holds for all $\alpha$, so
      taking infimum over all $\alpha$, we get the first inequality in (ii). The remaining inequality follows from the case $\alpha=1.$
    \end{proof}

It should be mentioned here that the inequality, $w_{A}^2(T) \leq  \frac{1}{2}w_{A}(T^2)+\frac{1}{4}\|T^{\sharp_A}T+TT^{\sharp_A}\|_A$ is known and it was given in \cite[Th. 2.11]{zamani}. Clearly, the first inequality in Theorem \ref{theo6} (ii) is sharper than the above mentioned inequality. To show that the  inequality in Theorem \ref{theo6} (ii)  is strictly sharper, we consider the following example.

 \begin{example}
 	
 Let $T=\left(\begin{matrix}
 0&0&0\\
 2&0&0\\
 0&1&0
 \end{matrix}  \right)$ and $A=\left(\begin{matrix}
 1&0&0\\
 0&1&0\\
 0&0&2
 \end{matrix}  \right).$ Then, simple calculations show that   $$ \frac{\alpha}{2}w_{A}(T^2)+\left\|\frac{\alpha}{4}(T^{\sharp_A}T+TT^{\sharp_A})+(1-\alpha) T^{\sharp_A}T\right\|_{A} =\frac{6\sqrt{2}+20}{13}\approx 2.19117549033 $$  \text{for} $\alpha=\frac{12}{13}$ and $$  \frac{1}{2}w_{A}(T^2)+\frac{1}{4}\|T^{\sharp_A}T+TT^{\sharp_A}\|_A= \frac{3+\sqrt{2} } {2} \approx 2.20710678119.$$ Hence, for the matrices $T$ and $A$,
 \begin{eqnarray*}
  && \min_{\alpha} \left\{	\frac{\alpha}{2}w_{A}(T^2)+\left\|\frac{\alpha}{4}(T^{\sharp_A}T+TT^{\sharp_A})+(1-\alpha) T^{\sharp_A}T\right\|_{A}\right\}\\
  & < & \frac{1}{2}w_{A}(T^2)+\frac{1}{4}\|T^{\sharp_A}T+TT^{\sharp_A}\|_A.
 \end{eqnarray*}
 \end{example}

    Finally, we prove the following inequalities.
    
    \begin{theorem}\label{theo8}
    	Let $T \in \mathcal{B}_{A}(\mathcal{H}).$ Then
    	\begin{align*}
    	& (i)~~ \left\|T\right\|_{A_\alpha}^2 \leq \|(1-\alpha) T^{\sharp_A}T+ \alpha  TT^{\sharp_A}\|_{A},\\
    	& (ii)~~w_A^2(T) \leq \min_{\alpha} \| \alpha T^{\sharp_A}T+(1-\alpha)TT^{\sharp_A}\|_{A}.
    	\end{align*}
    \end{theorem}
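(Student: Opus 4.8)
The plan is to estimate the two quantities $|\langle Tx,x\rangle_A|^2$ and $\|Tx\|_A^2$ that make up $\|T\|_{A_\alpha}^2$ in a pointwise fashion, and then recognise the resulting upper bound as an $A$-operator seminorm. Fix $x\in\mathcal H$ with $\|x\|_A=1$. From the defining relation $\langle Tu,v\rangle_A=\langle u,T^{\sharp_A}v\rangle_A$ of the $A$-adjoint (taken with $u=v=x$) we get $\langle Tx,x\rangle_A=\langle x,T^{\sharp_A}x\rangle_A$, so the Cauchy--Schwarz inequality for the semi-inner product $\langle\cdot,\cdot\rangle_A$ gives
\[|\langle Tx,x\rangle_A|^2=|\langle x,T^{\sharp_A}x\rangle_A|^2\le\|x\|_A^2\,\|T^{\sharp_A}x\|_A^2=\|T^{\sharp_A}x\|_A^2.\]
Using the same relation with $u=T^{\sharp_A}x$ and $v=x$ yields $\|T^{\sharp_A}x\|_A^2=\langle T^{\sharp_A}x,T^{\sharp_A}x\rangle_A=\langle TT^{\sharp_A}x,x\rangle_A$, and likewise $\|Tx\|_A^2=\langle T^{\sharp_A}Tx,x\rangle_A$. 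Combining these,
\[\alpha|\langle Tx,x\rangle_A|^2+(1-\alpha)\|Tx\|_A^2\le\big\langle\big(\alpha\,TT^{\sharp_A}+(1-\alpha)\,T^{\sharp_A}T\big)x,x\big\rangle_A.\]

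Next I would set $S:=\alpha\,TT^{\sharp_A}+(1-\alpha)\,T^{\sharp_A}T$. Since $\mathcal{B}_A(\mathcal{H})$ is an algebra containing $T$ and $T^{\sharp_A}$, we have $S\in\mathcal{B}_A(\mathcal{H})$, so $S$ is $A$-bounded and $\|S\|_A<\infty$. For $\|x\|_A=1$ the right-hand side of the last display is at most $\|Sx\|_A\,\|x\|_A\le\|S\|_A$, hence taking the supremum over all such $x$ gives
\[\|T\|_{A_\alpha}^2\le\|S\|_A=\|(1-\alpha)T^{\sharp_A}T+\alpha\,TT^{\sharp_A}\|_A,\]
which is part (i). (In fact $S$ is $A$-positive, since $\langle Sx,x\rangle_A=\alpha\|T^{\sharp_A}x\|_A^2+(1-\alpha)\|Tx\|_A^2\ge0$ for $\alpha\in[0,1]$, so that $\sup_{\|x\|_A=1}\langle Sx,x\rangle_A=\|S\|_A$ actually holds — the same property used in the proof of Theorem \ref{theo5} — but only the elementary bound $\langle Sx,x\rangle_A\le\|S\|_A$ is needed here.)

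For part (ii), I would combine (i) with $w_A(T)\le\|T\|_{A_\alpha}$ from Theorem \ref{theo1} to obtain $w_A^2(T)\le\|(1-\alpha)T^{\sharp_A}T+\alpha\,TT^{\sharp_A}\|_A$ for every $\alpha\in[0,1]$. Replacing $\alpha$ by $1-\alpha$, which again runs over $[0,1]$, turns this into $w_A^2(T)\le\|\alpha\,T^{\sharp_A}T+(1-\alpha)TT^{\sharp_A}\|_A$ for all $\alpha\in[0,1]$, and taking the minimum over $\alpha$ finishes the proof.

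There is no genuinely hard step in this argument. The only points deserving a little care are the identity $\|T^{\sharp_A}x\|_A^2=\langle TT^{\sharp_A}x,x\rangle_A$ (equivalently, that $TT^{\sharp_A}$ and $(T^{\sharp_A})^{\sharp_A}T^{\sharp_A}$ induce the same $A$-quadratic form), which is immediate from the definition of the $A$-adjoint, and the harmless relabelling $\alpha\leftrightarrow 1-\alpha$ used to pass from (i) to (ii); both are routine.
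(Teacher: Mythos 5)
Your proposal is correct and follows essentially the same route as the paper: the pointwise bound $|\langle Tx,x\rangle_A|^2\le\|T^{\sharp_A}x\|_A^2$ via Cauchy--Schwarz, rewriting $\|Tx\|_A^2$ and $\|T^{\sharp_A}x\|_A^2$ as the quadratic forms of $T^{\sharp_A}T$ and $TT^{\sharp_A}$, bounding by the $A$-operator seminorm and taking the supremum, then invoking Theorem \ref{theo1} and minimizing over $\alpha$ for (ii). Your explicit relabelling $\alpha\leftrightarrow 1-\alpha$ just makes transparent a step the paper leaves implicit.
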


    \begin{proof}
    	Let $x \in \mathcal{H} $ with $\|x\|_{A}=1.$ Then we get that
    	\begin{align*}
    		 \alpha |\langle Tx,x \rangle_{A}|^2+(1-\alpha) \| Tx\|_{A}^2  \leq &   (1-\alpha) \|Tx\|_{A}^2+\alpha \|T^{\sharp_A}x\|_{A}^2 \\ = &  (1-\alpha) \langle Tx,Tx \rangle_{A}+\alpha  \langle T^{\sharp_A}x,T^{\sharp_A}x \rangle_{A}\\ = & (1-\alpha)\langle T^{\sharp_A}Tx,x \rangle_{A}+\alpha \langle TT^{\sharp_A}x,x \rangle_{A}\\
    		 \leq & \| (1-\alpha) T^{\sharp_A}T+ \alpha TT^{\sharp_A}\|_{A}.
    	\end{align*}
     Taking supremum over all $x$ in $\mathcal{H}$ with $\|x\|_{A}=1,$ we get
     $$\left\|T\right\|_{A_\alpha}^2 \leq \|(1-\alpha) T^{\sharp_A}T+ \alpha TT^{\sharp_A}\|_{A},$$  Now, by using Theorem \ref{theo1} in (i) and considering minimum over $\alpha$, we get the inequality in (ii). 
    \end{proof}

    Here we note that the inequality, $w_A^2(T) \leq \min_{\mu \in [0,1]}\| \mu T^{\sharp_A}T+(1-\mu)TT^{\sharp_A}\|_{A},$ recently obtained in \cite[Th. 1]{P1}, is same as obtain in (ii) of Theorem \ref{theo8}.

  We conclude that by applying the $A_\alpha$-semi-norm of operators in $\mathcal{B}_{A}(\mathcal{H})$ we always get the A-numerical radius inequalities of operators in $\mathcal{B}_{A}(\mathcal{H})$, which  improve on the existing ones.

\bibliographystyle{amsplain}

\end{document}